\DeclareMathOperator{\Length}{Length}
\newtheorem{thm}{Theorem}[section]
\numberwithin{thm}{section}
\newtheorem{defn}[thm]{Definition}
\newtheorem{cor}[thm]{Corollary}
\newtheorem{lem}[thm]{Lemma}
\newtheorem*{theorem*}{Theorem}
\newtheorem{theorem}{Theorem}
\newtheorem{corollary}[theorem]{Corollary}
\begin{document}

\title{Bounding Shortest Closed Geodesics\protect\\ with Diameter on compact\protect\\ 2-dimensional Orbifolds Homeomorphic to $S^2$}
\author{Jinxuan Chen}

\maketitle

\begin{abstract}

Length-bounded sweepouts provide a method for bounding the length of the shortest closed geodesic of a closed manifold. In this paper, we generalize this approach to the case of compact 2-dimensional orbifolds homeomorphic to $S^2$ as well as compact 2-dimensional orbifolds with finite orbifold fundamental groups. We establish an inequality for the length of the shortest closed orbifold geodesic in terms of the diameter. (2020 Mathematics Subject Classification number: 57R18)
\end{abstract}


\section{Introduction}
\label{sec:introduction}
Given a Riemannian manifold, its closed geodesics are critical points for the energy functional on the space of free loops. Lusternik and Fet \cite{LF52} used this observation together with Morse theory to connect the topology of the free loop space to the existence of closed geodesics. This was later improved by Gromoll, Meyer \cite{GM69},  Vigu\`e-Poirrier, and Sullivan \cite{VS76} to prove the existence of infinitely many geodesics on almost all manifolds. Similar arguments were developed for more quantitative results, for example, on bounding the length of the shortest closed geodesics $l(M)$, especially in dimension 2. It is easy to see that given a non-simply-connected Riemannian manifold $M$, $l(M)\le 2D(M)$,  where $D(M)$ denotes the diameter of $M$. In the harder cases of Riemannian 2-sphere, Croke \cite{Cr88} proved that $l(M)\le 9D(M)$ and $l(M)\le(25+4\sqrt{2})\sqrt{A(M)}$ where $A(M)$ denote the area of $M$. The first inequality was later improved by Maeda \cite{Ma94} to $l(M)\le 5D(M)$. This Morse-theoretic method was generalized by Calabi and Cao \cite{CC92} by noting that closed geodesics appear as critical points of the mass functional in the space of 1-cycles. This new approach allowed Nabutovsky and Rotman \cite{NR02} to improve the previous bounds to $l(M)\le 4D(M)$ and $l(M)\le 8\sqrt{A(M)}$, which is further improved to $l(M)\le 3D(M)$ by Adelstein and Vargas Pallete \cite{adelstein2022length} in the non-negatively curved setting. Croke conjectured that $l(M)<(12)^{\frac{1}{4}}\sqrt{A(M)}$. The bound $(12)^{\frac{1}{4}}$ can be achieved by a Riemannian orbifold $S^2_{3,3,3}$ (can be thought of as two equilateral triangles glued together). It is conjectured that this bound can only be achieved in orbifolds.

In the case of Riemannian orbifolds, orbifold geodesics can be defined similarly, and the questions above on the existence and bounds of closed orbifold geodesics can be asked as well. Guruprasad and Haefliger \cite{GH03} defined the orbifold free loop space as an infinitely dimensional orbifold, and they showed that the orbifold topology is related to the existence and the number of closed orbifold geodesics. However, the problem of producing quantitative estimates analogous to those of Nabutovsky and Rotman hasn't been attacked yet, and it is the topic of this paper.

\begin{theorem}
	For any compact Riemannian 2-orbifold homeomorphic to $S^2$, denoted by $\mathcal{O}$, $l(\mathcal{O})\le 4D(\mathcal{O})$, where $l(\mathcal{O})$ is the length of the shortest non-trivial closed orbifold geodesic, and $D(\mathcal{O})$ is the diameter.
\end{theorem}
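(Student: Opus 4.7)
The plan is to adapt the argument of Nabutovsky and Rotman (\cite{nr02}) for the bound $l(M) \le 4D(M)$ on smooth Riemannian $2$-spheres to the orbifold category. In the smooth case, the proof combines (i) a length-bounded sweepout of $S^2$ by based loops of length at most $4D$ with (ii) a minimax principle in the free loop space, implemented via a Birkhoff-type curve-shortening process, yielding a closed geodesic whose length is bounded by the maximum length of loops in the sweepout. I will carry out both ingredients in the orbifold setting, letting the topology of the underlying space $|\mathcal{O}| = S^2$ (rather than orbifold homotopy) drive the minimax, which matters because for bad orbifolds like teardrops the orbifold universal cover is not $S^2$ and orbifold $\pi_2$ may vanish.

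For the sweepout, fix a regular basepoint $p \in \mathcal{O}$ and set $D = D(\mathcal{O})$. For each $q \in \mathcal{O}$ choose a shortest orbifold geodesic from $p$ to $q$; its length is at most $D$. I will assemble these segments into a $1$-parameter family of based orbifold loops $\{\gamma_s\}_{s \in [0,1]}$ at $p$, each of total length at most $4D$, with $\gamma_0$ and $\gamma_1$ constant at $p$, modeled on the Nabutovsky--Rotman construction for the smooth sphere. Forgetting the orbifold structure, this family defines a based map $S^2 \to |\mathcal{O}| = S^2$ representing a generator of $\pi_2(|\mathcal{O}|) \cong \mathbb{Z}$.

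Next, I apply an orbifold version of Birkhoff curve shortening to $\{\gamma_s\}$, working inside the Guruprasad--Haefliger orbifold free loop space (\cite{gh03}). In a local chart $\tilde U / \Gamma$ near a cone or reflector point, the shortening lifts to a $\Gamma$-equivariant operation on $\tilde U$ and projects to a length-non-increasing map on $\mathcal{O}$; constant loops are fixed. If iterating the shortening contracted all of $\{\gamma_s\}$ to constants through admissible orbifold loops, the underlying family of topological loops in $|\mathcal{O}|$ would also contract, contradicting the non-triviality of the class in $\pi_2(S^2)$. Hence the minimax over $\{\gamma_s\}$ is realized by a critical loop --- a closed orbifold geodesic $c$ --- of length at most $\sup_s \Length(\gamma_s) \le 4D$.

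The main obstacle is the curve-shortening step, which splits into two issues. First, at the critical slice the limiting loop may pass through a cone point, and I must verify it is an orbifold geodesic in the sense of \cite{gh03}, with the correct angle-balance at the cone point; this should follow from analyzing the $\Gamma$-equivariant shortening on the local cover and descending to $\mathcal{O}$. Second, I must ensure that the limit at the minimax level is a genuine closed orbifold geodesic rather than a degenerate object such as a loop collapsed to a cone point or one with uncontrolled winding; this requires a compactness statement for the orbifold free loop space together with a non-concentration argument driven by the underlying-space topology, so that loops cannot concentrate near a cone point without decreasing in length to zero. A secondary technical issue is mirror boundary: for orbifolds containing reflector arcs, the local $\mathbb{Z}/2$-action must be incorporated into the shortening so that shortened curves reflect correctly at the boundary.
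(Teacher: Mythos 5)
Your high-level plan (length-bounded sweepout, orbifold curve shortening, minimax driven by the topology of the underlying $S^2$ rather than orbifold homotopy) matches the paper's strategy in outline, and the observation that $|\mathcal{O}|$-topology should drive the argument is exactly right. But there is a genuine gap at the heart of the sweepout step: you propose a one-parameter family of \emph{based orbifold loops} at a fixed regular point $p$ of length $\le 4D$, and cite Nabutovsky--Rotman \cite{nr02} as the model. That is a misreading of \cite{nr02}: the $4D$ bound there is obtained by working in the space of \emph{$1$-cycles with two segments} in the sense of Calabi--Cao, not in the based (or free) loop space. A based-loop sweepout at a single basepoint of length $\le 4D$ is not known to exist and is almost certainly not constructible; the loop-based constructions of Croke and Maeda give $9D$ and $5D$ respectively. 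The $4D$ bound crucially exploits the ability of a $1$-cycle to split into two components and to change its ``basepoints'' continuously. Concretely, the paper builds a triangulation of $|\mathcal{O}|$, forms the triangle loops $[c_{ijk}]$ of length $\le 2D+\delta$, and pairs them into $1$-cycles of total length $\le 4D+2\delta$; the family $f$ then passes through the ``two loops'', ``figure-$8$'', and ``single loop'' types (Section 6.2), and in the middle the multiplicity point migrates from $x_1$ to $x_0$. There is no way to realize this migration within a family of based loops at a fixed $p$ without incurring extra length, so your sweepout as described cannot be built with the claimed bound.

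This forces you into the $1$-cycle space $\Gamma$ of Section 3, which brings two further issues you do not address. First, $\Gamma$ is non-Hausdorff: a figure-$8$ cycle cannot be separated from its two-loop or single-loop projections $p_1[c]$, $p_2[c]$ (Lemma 3.1). The paper controls this by working with \emph{ordered} piecewise-geodesic cycles $\hat\Gamma_N$ and by carefully Hausdorffifying when compactness is needed (Lemma~\ref{thm:convergingseq}). Second, the Birkhoff process alone is insufficient for the $1$-cycle minimax: the critical condition for $1$-cycles is the ``stability'' balance condition at the two multiple points (Definition 3.4), and the shortening has to respect the stratification of $\Gamma$ by type. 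This is why the paper introduces the descent on the steepest direction (Section 5.1), with its type-invariant vector systems, lower bound on the descent norm for non-trivial non-stable cycles, and the modified Birkhoff flow on $\hat\Gamma_N$; the deformation theorem \ref{thm:deform1} then combines both. Your proposal mentions only Birkhoff shortening. Finally, the concerns you raise about cone points and degenerate limits are real (the paper handles them via the principal/singular split $g^P$, $g^S$ in Section 5.1.5 and via the choice of triangulation vertices at regular points), but the treatment of reflector arcs is a red herring for the main theorem: an orbifold whose underlying space is homeomorphic to $S^2$ has no exceptional stratum; mirror boundary only enters the corollary via metric doubling.
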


\begin{corollary}
	For any compact Riemannian 2-orbifold with finite orbifold fundamental group, denoted by $\mathcal{O}$, $l(\mathcal{O})\le 8D(\mathcal{O})$.
\end{corollary}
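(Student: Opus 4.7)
My plan is to reduce the corollary to the theorem by passing to a $2$-fold orientable orbifold cover. If $\mathcal{O}$ is already homeomorphic to $S^2$, the theorem directly gives $l(\mathcal{O}) \le 4 D(\mathcal{O}) \le 8 D(\mathcal{O})$. Otherwise $\mathcal{O}$ is non-orientable as an orbifold (its underlying surface is $\mathbb{RP}^2$, or $\mathcal{O}$ has reflector boundary), and the orientation double cover $\pi : \tilde{\mathcal{O}} \to \mathcal{O}$ is a $2$-fold Riemannian orbifold cover whose total space is orientable. Since $\pi_1^{orb}(\tilde{\mathcal{O}})$ is an index-$2$ subgroup of the finite group $\pi_1^{orb}(\mathcal{O})$, the orientable orbifold $\tilde{\mathcal{O}}$ also has finite orbifold fundamental group, and the classification of compact $2$-orbifolds with finite $\pi_1^{orb}$ (via positivity of the orbifold Euler characteristic) forces its underlying topological space to be $S^2$. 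Applying the theorem upstairs and projecting along $\pi$, which is a local isometry of orbifold metrics, one obtains $l(\mathcal{O}) \le l(\tilde{\mathcal{O}}) \le 4 D(\tilde{\mathcal{O}})$, and it remains to prove $D(\tilde{\mathcal{O}}) \le 2 D(\mathcal{O})$.

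\textbf{Diameter bound for $\mathbb{Z}/2$ orbifold covers.} Let $\tau$ be the non-trivial deck involution and fix $\tilde{x}, \tilde{y} \in \tilde{\mathcal{O}}$. If $\tilde{y}$ is a fixed point of $\tau$, set $\tilde{e} = \tilde{y}$; otherwise, along any path in $\tilde{\mathcal{O}}$ from $\tilde{y}$ to $\tau\tilde{y}$, the continuous function $\tilde{z} \mapsto d(\tilde{z}, \tilde{y}) - d(\tilde{z}, \tau\tilde{y})$ changes sign, and the intermediate value theorem produces an equidistant point $\tilde{e}$ with $d(\tilde{e}, \tilde{y}) = d(\tilde{e}, \tau\tilde{y})$. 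Because minimizing paths in $\mathcal{O}$ lift to $\tilde{\mathcal{O}}$,
\[
\min\bigl(d(\tilde{e}, \tilde{y}),\, d(\tilde{e}, \tau\tilde{y})\bigr) = d_{\mathcal{O}}(\pi\tilde{e}, \pi\tilde{y}) \le D(\mathcal{O}),
\]
so both (equal) distances are at most $D(\mathcal{O})$. The same lifting principle applied to $\tilde{x}$ and $\tilde{e}$ gives $\min\bigl(d(\tilde{x}, \tilde{e}), d(\tilde{x}, \tau\tilde{e})\bigr) \le D(\mathcal{O})$, and in the latter case $d(\tau\tilde{e}, \tilde{y}) = d(\tilde{e}, \tau\tilde{y}) \le D(\mathcal{O})$ by $\tau$-isometry (using $\tau^{-1} = \tau$). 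The triangle inequality then yields $d(\tilde{x}, \tilde{y}) \le 2 D(\mathcal{O})$ in either case, so $D(\tilde{\mathcal{O}}) \le 2 D(\mathcal{O})$ and consequently $l(\mathcal{O}) \le 8 D(\mathcal{O})$.

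\textbf{Main obstacle.} The non-routine ingredient is invoking the classification of compact $2$-orbifolds with finite orbifold fundamental group to guarantee that the orientation double cover has underlying space $S^2$; once this cover is in place, the projection of closed orbifold geodesics is automatic from $\pi$ being a local isometry, and the $\mathbb{Z}/2$-cover diameter comparison is a clean metric argument using only the intermediate value theorem and the lifting property of minimizing paths.
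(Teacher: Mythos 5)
Your proof is correct, and it takes a genuinely different route from the paper in both of its nontrivial steps. The paper passes to the \emph{metric double} $\hat{\mathcal{O}}$ along the closure of the exceptional (reflector) stratum, then bounds $D(\hat{\mathcal{O}}) \le 2D(\mathcal{O})$ by picking a point $z$ in the reflector locus, using that $z$ has a \emph{unique} preimage $\hat z$, and lifting two minimizing paths $x\to z$ and $z\to y$ separately to concatenate upstairs. You instead pass to the \emph{orientation double cover} and prove the same diameter comparison by a clean general $\mathbb{Z}/2$-cover lemma: produce a $\tau$-equidistant point $\tilde e$ by the intermediate value theorem and run the triangle inequality through $\tilde e$ or $\tau\tilde e$. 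Each approach buys something. The paper's lifting-through-a-branch-point trick is shorter once the reflector stratum exists, but it is silent about the case $\mathcal{O}\cong\mathbb{RP}^2(p)$, which has finite $\pi_1^{\mathrm{orb}}$ and no reflector locus, so strictly speaking the "metric double along exceptional strata" is unavailable there; your orientation double cover handles that case uniformly, since the antipodal deck involution is fixed-point free and your equidistant-point argument never needed a fixed point. Conversely, your approach leans on the classification-of-spherical-$2$-orbifolds input (orientable plus positive $\chi^{\mathrm{orb}}$ forces underlying space $S^2$), which is the same classificatory ingredient the paper invokes, so neither route avoids it. A minor point worth making explicit in your write-up: the projection of a closed orbifold geodesic along $\pi$ is again a closed orbifold geodesic because an orbifold covering is locally modeled on subgroup inclusions in the charts, hence is a local isometry in the groupoid sense; the paper uses the same fact without comment.
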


Generalizing to the orbifold case is non-trivial because for starter, we do not have a defined orbifold 1-cycle space that detects closed orbifold geodesics. Also, complications arise from the fact that orbifold geodesics are not locally distance minimizing. The rough idea of the proof for the main result is the following: We define a space of orbifold 1-cycles with 2 segments (orbifold cycles for short) in a way that takes inspiration from Guruprasad and Haefliger's orbifold loop space, and on this space we generalize the Birkhoff curve shortening process and the so-called descent on the steepest direction. Closed orbifold geodesics appear as fixed points of these two processes. Similar to what Nabutovsky and Rotman did in \cite{NR04}, we use these two processes as ingredients and prove that, we can deform 1-parameter families of orbifold cycles shorter than $l(\mathcal{O})$ to constant orbifold cycles. Finally, we construct a specific 1-parameter family of orbifold cycles longer than $4D(\mathcal{O})$ that does not deform all the way to zero length. Then the non-contractibility of the family implies $l(\mathcal{O})\le 4D(\mathcal{O})$.

The manifold version of the deformation result on 1-cycles is dimension-free and global. In our case, the main issue we encounter is that the 1-cycle space is non-Hausdorff thus creating some complications. We circumvent this problem by controlling the amount of non-Hausdorffness and apply the deformation process only for 1-parameter families of orbifold 1-cycles with 2 segments.

The paper is organized as follows. In \S \ref{sec:preliminaries}, the basic definitions and known results of orbifolds as groupoids, the orbifold loop spaces will be recalled. In \S \ref{sec:cycles}, we define the space of orbifold 1-cycle with 2 segments for later use. In \S \ref{sec:birkhoff}, we generalize the Birkhoff curve shortening process to the space of piecewise-geodesic orbifold 1-cycle with 2 segments and prove its continuity in the Appendix. In \S \ref{sec:descent}, we generalize the ``descent on the steepest direction'' (the descent process in short) to the orbifold case. In \S \ref{sec:deforms}, we combine the Birkhoff process and the descent process to prove two deformation theorems on spaces of cycles. In \S \ref{sec:proof}, we construct the specific family of orbifold cycles and combine it with the deformation results from \S \ref{sec:deforms} to prove the main theorem as well as a corollary.

\textbf{Acknowledgement}

This work would not have been possible without the professional guidance and personal support of my supervisor, Marco Radeschi. I am heavily indebted to Marco for his dedication to share his expertise, as well as his unwavering encouragement.
\section{Preliminaries}
\label{sec:preliminaries}

\subsection{Orbifolds and Groupoids}
\label{subsec:orbifoldsgroupoids}
We will review and set the notations for the concepts we will use throughout
the paper. For deeper investigation, we refer interested reader to \cite{GH03}*{Section 2}. 

Let $\mathcal{O}$ be a compact Riemannian orbifold and let
$|\mathcal{O}|$ denote its underlying topological space. A Riemannian orbifold chart on $|\mathcal{O}|$ is a $4$-tuple $(X,q,V,\Gamma)$, where $X$ is an open neighborhood in $\mathbb{R}^n$ with $\Gamma$ a finite group by isometry acting on it and $q$ the quotient map by the action, and $V$ is an open neighborhood on $|\mathcal{O}|$ homeomorphic to the quotient space. We choose a Riemannian
orbifold atlas
\[
\{(X_i,q_i,V_i,\Gamma_i)\}_{i\in I},
\]
where the changes of charts are local isometries. Associated to this
atlas is the étale groupoid
\[
\mathcal{G} \rightrightarrows X,\qquad X=\bigsqcup_{i\in I} X_i,
\]
whose arrows are germs of changes of charts. We denote the source and
target maps by $\alpha,\omega:\mathcal{G}\to X$, respectively.
Every arrow \(g\in \mathcal{G}\) can be extended to a local isometry on $X$ by \cite{GH03}*{Lemma 2.1.5}.

For \(x\in |\mathcal{O}|\) and a chart $(X,q,V,\Gamma)$ containing $x$, the local group of \(x\) is the isotropy group of any
lift of \(x\) in $\Gamma$; it is well-defined up to conjugacy. A point is
called regular if its local group is trivial, and singular otherwise.

Let \(E\) be a topological
space equipped with an action map \(\alpha_E:E\to X\), a right \(\mathcal{G}\)-action
is a continuous map
\[
E\times_X \mathcal{G}\to E,\qquad (e,g)\mapsto e.g,
\]
where
\[
E\times_X\mathcal{G}=\{(e,g):\alpha_E(e)=\omega(g)\},
\]
satisfying
\[
e.1_{\alpha_E(e)}=e,\qquad \alpha_E(e.g)=\alpha(g),\qquad
(e.g).g'=e.(gg').
\]

We will focus on 2-dimensional orbifolds in this paper. An orientable 2-orbifold is a 2-orbifold with local groups being finite groups by rotation. Except for the last Corollary \ref{cor:last}, orbifolds are always assumed to be orientable, though we will not use this assumption until section \ref{subsec:field}.

\subsection{Orbifold Free Loop Space}\label{subsec:loopspace}
We follow the definitions of orbifold loop spaces by 
Guruprasad and Haefliger \cite{GH03}*{Section 2.3}. Let $x$, $y$ be two points of $X$. A continuous $\mathcal{G}$-path $c$ from $x$ to $y$ over a subdivision $0=t_0\le t_1\le t_2\le...\le t_k=1$ of $[0,1]$ is a sequence $(g_0,c_1,g_1,c_2,...,c_k,g_k)$ (sometimes for simplicity denoted by $(c_i,g_i)$) such that

(i) $c_i$ is a continuous map: $[t_i,t_{i+1}]\rightarrow X$ for $i=1,2,...,k$.

(ii) $g_i\in\mathcal{G}$ satisfies $\omega(g_i)=c_i(t_{i+1})$, $\alpha(g_i)=c_{i+1}(t_{i+1})$ for all $i$, and $\omega(g_0)=x$, $\alpha(g_k)=y$.

We have the following two operations for $\mathcal{G}$-paths:

(i) (Refining subdivision) Add a point $t^{\prime}$ such that $t_i\le t^{\prime}\le t_{i+1}$. We have a new $\mathcal{G}$-path $(g_0,...,g_{i-1},c_i^{\prime},g^{\prime},c_i^{\prime\prime},g_i,...,g_k)$, where the original $c_i$ is replaced by $c_i^{\prime}=c_i|_{[t_i,t^{\prime}]}$ $g^{\prime}=1_{c_i(t^{\prime})}$ and  $c_i^{\prime\prime}=c_i|_{[t^{\prime},t_{i+1}]}$.

(ii) (Transition of segment) We can change a segment $c_i$ to $c_i^{\prime}$ if there exists a continuous map $h:[t_i,t_{i+1}]\rightarrow \mathcal{G}$ such that $\alpha(h(t))=c_i(t)$ and $\omega(h(t))=c_i^{\prime}(t)$, resulting in a $\mathcal{G}$-path $(g_0,...,g_{i-1}^{\prime},c_{i}^{\prime},g_i^{\prime},...,g_k)$, where $g_{i-1}^{\prime}=g_{i-1}h^{-1}(t_i)$ and $g_i^{\prime}=h(t_{i+1})g_i$.

The equivalence class of $c$ generated by the above two operations, is denoted $[c]_{x,y}$, called a \textit{based orbifold path}. If $x=y$ then we call them \textit{based orbifold loop} and denote by $[c]_x$. The collection of all orbifold based loops will be denoted $\Omega_{x}$, and the union of all $\Omega_{x}$ for $x\in X$ is denoted $\Omega_X$ and called orbifold based space. Note that $\Omega_X$ is not the classical based loop space with one fixed basepoint, but a collection of all based loops. We will be especially interested in the orbifold based loop space of $H^1$-loops. An orbifold based loop $[c]_x$ is $H^1$ if for all representative $c$ of $[c]_x$, each segments $c_i$ is absolutely continuous, and $c$ statisfies that 
$$E(c)=\sum_i^kE(c_i)=\sum_i^k\int_{t_{i-1}}^{t_i}|(c_i)^{\prime}(t)|^2dt<\infty$$
where $E(c)$ is called the energy of $c$, which is simply computed by summing up the energy of all $c_i$'s. The inclusion of the space of orbifold based loop space of $H^1$-loops into the space of orbifold based loop space is a homotopic equivalence, and since in this paper we only care about the former, we will denote it as $\Omega_X$ as well. 

By Guruprasad and Haefliger \cite{GH03}, the based loop space of $H^1$-class $\Omega_X$ has a Riemannian Hilbert manifold structure, defined in the following way: 

(i) Let $H^1(c^*TX)$ be the space of $H^1$-sections on $c^*TX$, where $c^*TX$ is the vector bundle $\sqcup c_i^*TX/\sim$ where the equivalence relationship is defined by gluing together $c_i^*TX$ and $c_{i+1}^*TX$ with $g_i^*$. $T_{[c]_x}\Omega_X$ the \textit{tangent space at $[c]_x$} is the equivalence class of $H^1(c^*TX)$, where the equivalence class is induced by the natural isomorphism between $c^*TX$ and $(c^\prime)^*TX$ where $c$ and $c^{\prime}$ are two $\mathcal{G}$-loop representative of $[c]$. Sometimes we identify $T_{[c]_x}\Omega_X$ and $H^1(c^*TX)$ when there is no confusion.

(ii)Let $[v]$ be any vector in $T_{[c]_x}\Omega_X$ represented by $v=(v_1,...,v_k)$ where $v_i\in c_i^*TX$, and the superscript $\epsilon>0$ is the pointwise bound for $|v_i(t)|$. \textit{ The exponential map} $\exp^{\epsilon}_{[c]_x}:T^{\epsilon}_{[c]_x}\Omega_X\rightarrow \Omega_X$ sends $[v]$ to $[\exp_cv]$, where $\exp_cv=(d_i,\bar{g}_i|_{d_{i+1}(t_i)})$ such that $d_i(t)=\exp_{c_i(t)}v_i(t)$ for all $t$, and $\bar{g}_i$ is a local isometry generated by $g_i$, for all $i$. It can be checked that this map is well-defined and bijective for $\epsilon$ small enough.  It can be checked that $\exp_cH^{1,\epsilon}(c^*TX)$ can be identified with $\exp_{[c]_x}T_{[c]_x}^{\epsilon}\Omega_X$ for $\epsilon$ small. We call $\exp_{[c]_x}T_{[c]_x}^{\epsilon}\Omega_X$ a \textit{modeling neighborhood} on $\Omega_X$ for such small $\epsilon$, for it can be locally modeled by Riemannian Hilbert manifold $H^1(c^*TX)$. We call $\exp_cv$ the \textit{modeling representative} of $[\exp_cv]$ (with respect to $c$).

(iii)For $[v],[w]\in T_{[c]_x}\Omega_X$. The Riemannian metric $(\cdot,\cdot)$ at $[c]_x$ is defined by
    $$(v,w)=\sum_{i=1}^{k}\int_{t_{i-1}}^{t_i}\left(\langle v_i(t),w_i(t)\rangle+\langle\frac{D}{dt} v_i(t),\frac{D}{dt} w_i(t)\rangle\right)dt$$

The groupoid $\mathcal{G}$ acts on $\Omega_X$ naturally from the right by permuting the base points. Namely, the action map is defined as $\alpha_{\Omega_X}:\Omega_X\rightarrow X$ sending an orbifold based loop $[c]_x$ to its basepoint $x$, and the action is defined by $\Omega_X\times_X\mathcal{G}\rightarrow \Omega_X$ sending $([c]_x,g)$, with $c=(g_0,c_1,...,c_k,g_k)$ being a representative of $[c]_x$, to $[c^{\prime}]_{\alpha(g)}$, where $c^{\prime}=(g^{-1}g_0,c_1,...,c_k,g_kg)$. The quotient space $\Omega_X/\mathcal{G}$ is denoted  $\Lambda \mathcal{O}$, called the \textit{orbifold free loop space} on $\mathcal{O}$. The element of the free loop space that is represented by the based loop $[c]_x$ will be denoted $[c]$. The orbifold free loop space $\Lambda \mathcal{O}$ is itself an infinite dimensional Riemannian orbifold by \cite{GH03}, with orbifold tangent space $T_{[c]}\Lambda\mathcal{O}$ at $[c]$ being isomorphism classes of $H^{1,\epsilon}(c^*TX)$, exponential map $\exp_{[c]}$ being the map sending $[v]\in H^{1,\epsilon}(c^*TX)$ to $[\exp_{[c]_x}[v]]$ where $[c]_x$ is a orbifold based loop representative of $[c]$ and the equivalence class is quotient by transition of basepoints. $\exp_{[c]}T^{\epsilon}_{[c]}\Lambda\mathcal{O}$ is called a \textit{modeling neighborhood} on $\Lambda\mathcal{O}$ for $\epsilon$ small enough so that $\exp_{[c]}$ is well-defined. 

By \cite{GH03}, it can be checked that $\exp_{[c]}T^{\epsilon}_{[c]}\Lambda\mathcal{O}$ can be identified with \\$\exp_{[c]_x}T_{[c]_x}\Omega_x/\mathcal{G}_x$, where $\mathcal{G}_x$ is the group in $\mathcal{G}$ that fixes $x$ and the $\mathcal{G}_x$ action is the restriction of the action above that sends $[c]=[(g_0,c_1,...,c_k,g_k)]$ to $[c^{\prime}]=[(g^{-1}g_0,c_1,...,c_k,g_kg)]$. Note that, for $\epsilon$ small, the $\mathcal{G}_x$ action might only be defined for a subgroup of $\mathcal{G}_x$ that fix $\exp_{[c]_x}T_{[c]_x}\Omega_x$. For $v\in H^{1,\epsilon}(c^*TX)$, we say that $\exp_cv$ is a \textit{modeling representative} with respect to $c$ for $\exp_{[c]}[v]$ if $[\exp_cv]=\exp_{[c]}[v]$. Note that modeling representative might not be unique if the $\mathcal{G}_x$ action is non-trivial on $\exp_{[c]_x}T_{[c]_x}\Omega_x$.

The projection of $[c]$ on the underlying topological space is a loop on the underlying topological space $|\mathcal{O}|$, denoted by $|[c]|$, or simply by $|c|$. Denote $|[c]|(t)$ the evaluation of $|[c]|$ at $t$, by $[c](t)$.

Although we have a natural topology induced by the Riemannian metric $(\cdot,\cdot)$ on $\Omega_X$ and $\Lambda\mathcal{O}$, it is not easy to use. Therefore we will instead use the following pointwise topology that corresponds to pointwise convergence.
\begin{defn}\label{def:looppointwisetop}
    $[c^j]\rightarrow[c]$ in the pointwise topology if there exists a number $J$, such that for all $j>J$, $[c^j]$ is in a modeling neighborhood  $\exp_{[c]}^{\epsilon} H^{1,\epsilon}(c^*TX)$ with respect to some representative $c$ of $[c]$, and there exists modelling representatives $c^j$ satisfy that $c^j_i(t)\rightarrow c_i(t)$ for all $i$ and for all $t$ as $j\rightarrow \infty$.
\end{defn}

We say an orbifold free loop $[c]$ is \textit{geodesic at a point $t$}$\in(0,1)$ if its representative $c$ is a geodesic near the corresponding point. Note that \textit{geodesic at $0$} (or $1$) will further require that $dg_0c^{\prime}_1(0)=c^{\prime}_k(1)$, which geometrically means matching velocity at both end of $0$. Closed orbifold geodesics and piecewise orbifold geodesic free loops with $N$ breaks can be defined in the standard ways (multiplicity of breaks is allowed).

Given two orbifold free paths $[c]$ over the domain $[0,1]$ with representative $(g_0,c_1,...,c_k,g_k)$ and $[d]$ over the domain $[1,2]$ with representative $(h_0,d_1,...,d_l,h_l)$, their concatenation $[c*d]$ can be defined as $[(g_0,...,g_kg^*h_l,...,h_l(g^*)^{-1})]$ for $g^*\in\mathcal{G}$ with source $d_1(1)$ and target $c_k(1)$, if their endpoints coincide and is a regular point on the orbifold. The choice of $g^*$ is unique since $[c(1)]=[d(1)]$ is regular.

\section{Orbifold 1-cycles with 2 segments}
\label{sec:cycles}

In the manifold case, Calabi and Cao \cite{CC92} considered the space of 1-cycles with 2 segments:
$$\Gamma=\{(\gamma_1,\gamma_2):\ \gamma_1,\gamma_2:[0,1]\rightarrow M, \ \{\gamma_1(0),\gamma_2(0)\}=\{\gamma_1(1),\gamma_2(1)\}\ \}$$ 
There are three possible types for a 1-cycle $(\gamma_1,\gamma_2)$: 
(i) $\gamma_1(0)=\gamma_1(1)\ne\gamma_2(0)=\gamma_2(1)$, called type I, which is in fact \textit{two loops}.
(ii) $\gamma_1(0)=\gamma_2(1)\ne\gamma_2(0)=\gamma_1(1)$, called type II, which is \textit{one single loop} parametrized over $[0,2]$.
(iii) $\gamma_1(0)=\gamma_1(1)=\gamma_2(0)=\gamma_2(1)$, called type III, which is a \textit{figure ``8''}.

Let $\mathcal{O}$ be an orbifold, and let $(X,\mathcal{G})$ be an induced groupoid structure of $\mathcal{O}$. We will define an orbifold 1-cycle space $\Gamma$ to be the disjoint union of $\Gamma_1$(two loops), $\Gamma_2$(single loops) and $\Gamma_3$(figure ``8''s). We will see that the tangent space as well as the exponential map can be defined on $\Gamma$ similar to the case of orbifold loops but allow a figure ``8'' to exponentiate into the spaces of other two types. These three types of cycle space are ``glued together'' not by identifying a common subspace among them, but by picking a non-Hausdorff topology generated by the image of small balls in the tangent space under the exponential map. The key picture is that this topology allow us to ``flow'' through different types, as we shall see in chapter \ref{sec:deforms}. 

\subsection{The Space \texorpdfstring{$\Gamma$}{g} of Orbifold 1-Cycles with 2 Segments}

Let $\Gamma_1$ be the space $(\Lambda\mathcal{O})^2$, corresponding to type I.

Let $\Gamma_2$ be the space $\Lambda\mathcal{O}_{[0,2]}$, corresponding type II, where the subscript $[0,2]$ signifies the domain of parametrization. 

The orbifold 1-cycle space of type III is relatively harder to describe. We will start by defining $\mathcal{G}$-cycles of figure ``8'' on $(X,\mathcal{G})$.

\begin{defn} A \textit{type III $\mathcal{G}$-cycle} over a subdivision $0=t_0\le t_1\le...\le t_{n-1}\le 1=t_n\le...\le t_m=2$ $(0<n<m)$ is defined to be a sequence $c=(g_0,c_1,g_1,...,c_n,g_{n_-},g_{n_+},c_{n+1},...,c_m,g_m)$ for some positive integer $n$ and $m$ such that the first $2n+1$ terms $(g_0,...,g_{n_-})$ (called the first part of $c$) and the rest $(g_{n_+},...,g_m)$ (the second part) are two $\mathcal{G}$-paths based at the same point $\omega(g_0)$.
\end{defn}

The subdivision and transition operations for the two parts of $c$ then induces subdivision and transition of $c$. The equivalence classes generated by the two operations will be called \textit{type III orbifold based 1-cycles}. The equivalence class of $c$ is denoted $[[c]]$. The collection of all such based 1-cycles are denoted $\Omega_3$. $\mathcal{G}$ acts naturally on $\Omega_3$ via the diagonal action by permutation on the shared basepoint of the two parts. We call the equivalence classes of type III orbifold based 1-cycles generated by this action \textit{type III orbifold (free) 1-cycles}, the space of which from now on will be denoted as $\Gamma_3$. 

Note that $\Gamma_1$ and $\Gamma_2$ can also be defined in a similar way as $\Gamma_3$: We call a sequence $c$ consisting of two parts that are both $\mathcal{G}$-path a $\mathcal{G}$-cycle of type I. We define the four \textit{extreme points} of $c$ to be $\omega(g_0)$ $\alpha(g_{n_-})$ $\omega(g_{n_+})$ and $\alpha(g_m)$, denoted by $0^+,1^-,1^+,2^-$. If $0^+$ meets $1^-$ and $1^+$ meets $2^-$, then the equivalence classes generated by subdivision and transition operations from the two parts will be called type I orbifold based 1-cycles, with its collection denoted $\Omega_1$. $\mathcal{G}\times\mathcal{G}$ acts naturally on $\Omega_1$ by basepoint permutation on the two sets of extreme points. $\Gamma_1$ consists of equivalence classes generated by this action. Similar for $\Gamma_2$: A type II $\mathcal{G}$-cycle is a sequence $(g_0,c_1,...,g_{n_-},...,g_m)$ over $0=t_0\le...\le 1=t_n\le...\le t_m$ with $0^+=2^-$ and $1^+=1^-$. A type II based 1-cycle is an equivalence class generated by transition and subdivision. Here transition of $1^+$ is also allowed. The type II orbifold free 1-cycle is the equivalence class generated by permutation on $0^+=2^-$ by $\mathcal{G}$.

Also note that, for any $c$ represents an element in $\Gamma_3$, it also represents respectively an element in $\Gamma_1$ and one in $\Gamma_2$. These inclusions commute with subdivision, transition, and basepoint permutation, thus induce two maps $p_i:\Gamma_3\rightarrow\Gamma_i$ for $i=1,2$. Roughly speaking, $p_1$ tells $\Gamma_3$ to forget that $0^+=1^-$ coincide with $1^+=2^-$, and $p_2$ tells $\Gamma_3$ to forget that $0^+=2^-$ coincide with $1^+=1^-$.

\textit{The space of orbifold 1-cycles with 2 segments $\Gamma$} is defined as $\Gamma_1\sqcup\Gamma_2\sqcup\Gamma_3$. Since we will only talk about orbifold 1-cycle with 2 segments, for simplicity, from now on we will just call them orbifold 1-cycles. The orbifold 1-cycle represented by a $\mathcal{G}$-cycle $c$ will be denoted $[c]$. In this paper, we will only consider $L$-Lipschitz orbifold free 1-cycles for some positive $L$. By $L$-Lipschitz, we mean that for any representative $c$ of $[c]$, any segment $c_i$ is $L$-Lipschitz.

\subsection{Tangent Space and Exponential Map on \texorpdfstring{$\Gamma$}{g}}\label{subsec:modeling}

For $[c]\in\Gamma$ with representative $c=(g_0,...,g_m)$, $c^*TX$ the pullback bundle by $c$ can be defined like in the case of orbifold free loops, though we will have to carefully glue together $c_i^*TX$'s at $\{0,1,2\}$ according to the type of $[c]$:  glue $0^+$ $1^-$ together and $1^+$ $2^-$ together for type I, glue $0^+$ $2^-$ together and $1^+$ $1^-$ together for type II, all four extreme points together for type III. Here for simplicity of notation, we denote the pullback bundle all by $c^*TX$ despite different gluing for different types of $c$. Similar to how orbifold loop space is defined in Section \ref{subsec:loopspace}, for $[c]\in\Gamma_1\sqcup\Gamma_2$, we have the space $H^1(c^*TX)$ of $H^1$ sections on $c^*TX$, respecting the gluing according to type, and the tangent space $T_{[c]}\Gamma$ as an equivalence class of $H^1(c^*TX)$. However for $[c]\in\Gamma_3$, the tangent space is deliberately defined in a slightly different way. 

For $[c]\in\Gamma_3$, we consider a slightly bigger space than $H^1(c^*TX)$, denoted $T_c\Gamma$, allowing possible discontinuity at $t=0,1,2$. Namely, we require\\ $\{v(0^+),v(1^-)\}=\{v(1^+),v(2^-)\}\subset T_{c(0)}X$ for $v\in T_c\Gamma$. Here we are denoting $dg_0v_1(0)$ by $v(0^+)$, $dg_{n_-}^{-1}v_n(1)$ by $v(1^-)$, $dg_{n_+}v_{n+1}(1)$ by $v(1^+)$, and $dg_{m}^{-1}v_m(2)$ by $v(2^-)$. The elements in $T_c\Gamma$ then contains three types: ones with $v(0^+)=v(1^-)$ and $v(1^+)=v(2^-)$, ones with $v(0^+)=v(2^-)$ and $v(1^+)=v(1^-)$, and ones with all four vectors coincide, whose collection will be denoted by $T^1_c\Gamma$, $T^2_c\Gamma$, and $T^3_c\Gamma$, respectively. Then again similar to Section \ref{subsec:loopspace} the tangent space $T_{[c]}\Gamma$ and $T^i_{[c]}\Gamma$ are defined as an equivalence class of $T_c\Gamma$ and $T^i_c\Gamma$, respectively, for $i=1,2,3$.

The exponential map $\exp_c$ at a $\mathcal{G}$-cycle $c$, the exponential $\exp_{[c]}$, and the modeling neighborhood $\exp_cT^{\epsilon}_c\Gamma$ of $\exp_{[c]}T^{\epsilon}_{[c]}\Gamma$ for $\epsilon$ small can be defined as in the case of orbifold loops spaces in Section \ref{subsec:loopspace} by the exact same argument. 

Denote the collection of $[[c]]$ for $c$ in $S$ a collection of $\mathcal{G}$-cycles by $[[S]]$. The following will be useful. $\exp_{[c]}T^{\epsilon}_{[c]}\Gamma$ can be identified with, for $[c]\in \Gamma_1$  $[[\exp_cT^{\epsilon}_c\Gamma]]/(\mathcal{G}_{0^+}\times\mathcal{G}_{1^+})$ where $\mathcal{G}_{0^+}\times\mathcal{G}_{1^+}$ acts by permuting the two basepoints of the two orbifold based loop of $[[c]]$, for $[c]\in\Gamma_2$  $[[\exp_cT^{\epsilon}_c\Gamma]]/\mathcal{G}_{0^+}$ where $\mathcal{G}_{0^+}$ acts by permuting the basepoints of $[[c]]$, for $[c]\in \Gamma_3$ $[[\exp_cT^{\epsilon}_c\Gamma]]/\mathcal{G}_{0^+}$ where $\mathcal{G}_{0^+}$ acts by permuting the four extreme points of $[[c]]$. Here the $\mathcal{G}_{0^+}$ or $\mathcal{G}_{0^+}\times\mathcal{G}_{1^+}$ action is in fact a subgroup action of $\mathcal{G}_{0^+}$ or $\mathcal{G}_{0^+}\times\mathcal{G}_{1^+}$, respectively, that fixs $[[\exp_cT^{\epsilon}_c\Gamma]]$. We call this subgroup the modeling group.

Note that for $[c]\in\Gamma_3$, exponential of elements in $T_{[c]}^1\Gamma$ actually land in $\Gamma_1$, and similar for $T_{[c]}^2\Gamma$. This deliberately chosen $T_{[c]}\Gamma$ for $[c]\in\Gamma_3$ allow us to exponentiate into type I and type II.

\subsection{The Topology on \texorpdfstring{$\Gamma$}{g}}

The topology on $\Gamma$ will be defined similar to the pointwise topology in Definition \ref{def:looppointwisetop}. 
\begin{defn}
    A sequence $[c^j]\in\Gamma$ converges to $[c]$, if there exists a number $J$ such that for all $j>J$, $[c^j]$ lies in a modeling neighborhood around $[c]$ with respect to representative $c$ of $[c]$, and the modeling representative $c^j$ with respect to $c$ such that their $i$-th segments $c^j_i\rightarrow c_i$ pointwise for all $i$.
\end{defn}

Note that pointwise convergence for Lipschitz curves is the same as uniformly pointwise convergence. Therefore the collection of $\epsilon$-balls $\{[d]\in\exp_{[c]}T^{\epsilon}_{[c]}\Gamma\}$ is a basis for the topology on $\Gamma$ .

This topology is in fact not Hausdorff. For example, for any $[c]\in \Gamma_3$, it can not be separated from $p_1[c]\in \Gamma_1$ by two disjoint open sets. However, the amount of non-Hausdorffness of this topology can be controlled by the following lemma.
\begin{lem}
    For any $[c]\in \Gamma_3$, it cannot be separated from its $\Gamma_1$ counterpart $p_1[c]$ and its $\Gamma_2$ counterpart $p_2[c]$. Also, for any $[c]\ne[d]\in\Gamma_3$ with $p_1[c]=p_1[d]$ or $p_2[c]=p_2[d]$, they cannot be separated from each other. The above are the only cases where $\Gamma$ is non-Hausdorff.
\end{lem}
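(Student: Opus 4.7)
The plan is to exploit the decomposition of modeling neighborhoods around $[c]\in\Gamma_3$: since $T_c\Gamma=T^1_c\Gamma\sqcup T^2_c\Gamma\sqcup T^3_c\Gamma$ with $\exp_c$ of $T^i_c\Gamma$ taking values in $\Gamma_i$ (and $0\in T^3_c\Gamma$ giving $[c]$ itself), any modeling neighborhood of a figure-``8'' already meets all three component spaces. Non-separability of two points will be established by exhibiting a single sequence converging to both; separability of the remaining pairs will follow by shrinking modeling neighborhoods.

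To show $[c]\in\Gamma_3$ cannot be separated from $p_1[c]\in\Gamma_1$, I would fix any nonzero $v\in T^1_c\Gamma$ and set $[e^j]:=[\exp_c(v/j)]\in\Gamma_1$. Relative to $c$, the tuples $\exp_c(v/j)$ are modeling representatives of $[e^j]$ converging pointwise to $c$, so $[e^j]\to[c]$. But as tuples of curves, $c$ and the pair-of-loops $p_1c$ coincide segment-by-segment (they differ only in gluing data), so the same $\exp_c(v/j)$, viewed as modeling representatives of $[e^j]$ in $\Gamma_1$, converge pointwise to $p_1c$, giving $[e^j]\to p_1[c]$. Hence every pair of modeling neighborhoods of $[c]$ and $p_1[c]$ contains $[e^j]$ for $j$ large, blocking separation; the $T^2_c\Gamma$ analogue handles $p_2[c]$. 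For distinct $[c]\ne[d]\in\Gamma_3$ with $p_1[c]=p_1[d]$, I would observe that the $\Gamma_3$-equivalence permutes all four path-ends at the figure-``8'' node simultaneously by a single $\bar g\in\mathcal{G}_x$, whereas the $\Gamma_1$-equivalence allows independent basepoint shifts on each loop; thus representatives $c,d$ exist with $d$ obtained from $c$ by applying some nontrivial $\bar g\in\mathcal{G}_x$ to the path-ends on only one of the two loops. For $v\in T^1_c\Gamma$, define $w$ componentwise by $w_i=\bar g_*v_i$ on the affected loop components and $w_i=v_i$ elsewhere. The endpoint identifications defining $T^1$ survive, so $w\in T^1_d\Gamma$, and $[\exp_c v]=[\exp_d w]$ as elements of $\Gamma_1$ since the $\bar g$-shift on one loop alone is trivial in $\Gamma_1$. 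Taking $v=v^j\to 0$ yields a sequence in $\Gamma_1$ converging to both $[c]$ and $[d]$; the $p_2$ case is analogous.

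For the separability of the remaining pairs I would use that $\Gamma_1=(\Lambda\mathcal{O})^2$ and $\Gamma_2=\Lambda\mathcal{O}_{[0,2]}$ are Hausdorff from the Riemannian Hilbert orbifold structure on $\Lambda\mathcal{O}$, so distinct points within a single $\Gamma_i$ are separable. For distinct $[c],[d]\in\Gamma_3$ with $p_1[c]\ne p_1[d]$ and $p_2[c]\ne p_2[d]$, sufficiently small modeling neighborhoods of $[c]$ and $[d]$ have $\Gamma_i$-parts contained in disjoint Hausdorff neighborhoods of $p_i[c]$ and $p_i[d]$ for $i=1,2$, and disjoint $\Gamma_3$-parts by continuity of the exponentials; mixed cross-component pairs such as $[c]\in\Gamma_3$, $[d]\in\Gamma_1\setminus\{p_1[c]\}$ reduce similarly by separating the $\Gamma_1$-parts. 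The main technical obstacle is the $p_1[c]=p_1[d]$ step above: verifying that the $\mathcal{G}_x$-translation on a single loop genuinely sends $T^1_c\Gamma$ to $T^1_d\Gamma$ while keeping the exponentials in the same $\Gamma_1$-class, since the $T^i$-decomposition is dictated by which specific pairs of path-ends are identified at the figure-``8'' node.
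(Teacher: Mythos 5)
Your proposal is correct and follows essentially the same route as the paper: one exhibits a single sequence $[\exp_c(v/j)]$, $v\in T^1_c\Gamma$ (or $T^2_c\Gamma$), converging simultaneously to $[c]$ and to $p_1[c]$ (or $p_2[c]$) in the pointwise topology, and uses Hausdorffness of $\Gamma_1$ and $\Gamma_2$ together with the structure of modeling neighborhoods for the converse. The one place you work harder than the paper is the $p_1[c]=p_1[d]$ step: you explicitly characterize $d$ as obtained from $c$ by a $\bar g\in\mathcal{G}_x$ applied to the path-ends of one loop only, and then push $v$ forward componentwise. The paper's proof sidesteps this by taking an arbitrary representative $d$ of $[d]$ and using the canonical isomorphism $T_{[c']}\Lambda\mathcal{O}\oplus T_{[c'']}\Lambda\mathcal{O}\cong T_{[d']}\Lambda\mathcal{O}\oplus T_{[d'']}\Lambda\mathcal{O}$ (both being models of $T_{p_1[c]}\Gamma_1$) to produce the corresponding $w\in T_d\Gamma$ directly, without ever describing how the $\mathcal{G}$-cycles $c$ and $d$ differ. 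Your version requires the extra (true, but not entirely trivial) claim that, after normalizing subdivisions and basepoint orbits, representatives of two elements of $\Gamma_3$ with the same $p_1$-image differ precisely by a single $\bar g\in\mathcal{G}_x$ acting on one loop's path-ends; the paper's abstract tangent-space argument avoids that bookkeeping. Both arguments succeed, so this is a cosmetic rather than substantive difference.
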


\begin{proof}

By definition, $\exp_{[c]}$ for $[c]\in\Gamma_i$ for $i=1,2$ only produces orbifold 1-cycle in $\Gamma_i$. Therefore elements in $\Gamma_1$ can be separated from elements in $\Gamma_2$. Also, since $\Gamma_1=\Lambda\mathcal{O}^2$ and $\Gamma_2=\Lambda\mathcal{O}_{[0,2]}$ are Hausdorff, two elements both in $\Gamma_i$ can be separated from each other, for $i=1,2$.

In fact, $\Gamma_3$ is also Hausdorff under the subspace topology: For $[c]\ne[d]\in\Gamma_3$ in the same modeling neighborhood, there exists modeling representatives $c$ $d$ different at least at some $c_i(t)\ne d_i(t)$ for some segment $i$ and some $t\in[0,1]\sqcup[1,2]$. Then take $\epsilon$ to be one-third of the distance from $c_i(t)$ to $d_i(t)$, the $\epsilon$-balls around $[c]$ and $[d]$ separate them. For $[c]$ $[d]$ not in the same modeling neighborhood, suppose that they cannot be separated, that is, for any small $\epsilon$, $\epsilon$-balls around $[c]$ and $[d]$ intersects. Let $[e]$ be in the intersection, then for small enough $\epsilon$, a modeling neighborhood of $[e]$ can be defined that entails $[c]$ and $[d]$.

For $[c]\in\Gamma_3$, an $\epsilon$-ball $B_{\epsilon}([c])$ around $[c]$ intersects $\Gamma_1$ $\Gamma_2$ and $\Gamma_3$ non-trivially. Fix $i=1$ or $2$. It is straightforward to verify that the $\Gamma_i$ component of $B_{\epsilon}([c])$ is exactly $B_{\epsilon}(p_i[c])\backslash p_i(B_{\epsilon}([c])\cap\Gamma_3)$, which is contained by $B_{\epsilon}(p_i[c])$. Therefore $[d]\in\Gamma_i$ with $[d]=p_i[c]$ cannot be separated from $[c]$, since their $\epsilon$-balls always have non-trivial intersection. On the other hand, $[d]\in\Gamma_i$ with $[d]\ne p_i[c]$ can be separated from $[c]$ since $p_i[c]$ can be separated from $[d]$.

For $[c]$ $[d]$ both in $\Gamma_3$ with the same $p_i$ projection for $i=1$ or $2$, they cannot be separated since the $\Gamma_i$ component of their neighborhoods cannot be separated. If they have different $p_1$ and $p_2$ projections, the $\Gamma_i$ component of their neighborhoods can be separated by Hausdorff-ness of $\Gamma_i$ for $i=1,2,3$.
\end{proof}

The above pointwise convergence in $\Gamma$ clearly induces pointwise convergence on $|\mathcal{O}|$ the underlying space of the orbifold. Namely, if $[c^i]\rightarrow [c]\in \Gamma$ as $i\rightarrow \infty$, then for any $t\in[0,1]\sqcup[1,2]$,  $[c^i](t)\rightarrow [c](t) \in|\mathcal{O}|$.

\subsection{Local \texorpdfstring{$\mathcal{G}$}{G}-homotopy and Homotopy} 
\begin{defn}
	Two $\mathcal{G}$-cycle $c=(g_i,c_i)$ and  $d=(h_i,d_i)$ over a subdivision $0=t_0<t_1<...<t_m=2$ are \textit{locally $\mathcal{G}$-homotopic} if there exists a sequence $H=(l_0,e_1,l_1,...,e_m,l_m)$ where $l_i:[0,1]\rightarrow \mathcal{G}$ are continuous maps such that $l_i(0)=g_i$ $l_i(1)=h_i$, $e_i:[0,1]\times[t_{i-1},t_i]\rightarrow X$ are continuous maps such that $e_i|_0=c_i$ $e_i|_1=d_i$, and $\alpha(l_i(s))=e_{i+1}(s,t_i)$  $\omega(l_i(s))=e_i(s,t_i)$ for $s\in[0,1]$. We denote the homotopy relation between $c$ and $d$ by $c\overset{H}{\sim_{\mathcal{G}}}d$, or simply $c\sim_{\mathcal{G}}d$.
\end{defn}

For any orbifold free cycle $[c]$, any $[d]$ in a modeling neighborhood $\exp_{[c]}^{\epsilon} T^{\epsilon}_{[c]}(\Gamma)$ around $[c]$, can be represented as $\exp_{[c]}^{\epsilon} v=[d]$ for some $v\in T^{\epsilon}_c\Gamma$. Clearly $\exp_{c}( s v)$ for $s\in [0,1]$ is a $\mathcal{G}$-homotopy connecting $c$ and $d$. It is also easy to see that, any two orbifold free (or based) cycles in a common modeling neighborhood have $\mathcal{G}$-homotopic representatives. The notion of $\mathcal{G}$-homotopy can be extended to orbifold free and based loops as well. We say two orbifold 1-cycle $[c]$ and $[d]$ are \textit{homotopic} if there exists a map $H:[0,1]\rightarrow \Gamma$ such that $H(0)=[c]$ and $H(1)=[d]$. It is easy to see that a $\mathcal{G}$-homotopy on representatives induces a homotopy on orbifold 1-cycles.

\subsection{The Length Functional and Stable 1-cycles}

For any $[c]\in\Gamma$ with representative $c$, the \textit{length} of $[c]$ is defined by summing together the length of each $c_i$. The length functional is well-defined but might not be continuous in general in terms of the pointwise topology. However this will not be a problem for us since we will confine ourself only to piecewise-geodesic orbifold 1-cycles with 2 segments that are $L$-Lipschitz. Denoted by $\Gamma^{\le L}$ for all orbifold 1-cycles with 2 segments of length less or equal to $L$, $\Gamma^0$ for all orbifold 1-cycles with 2 segments of zero length. 

Given $\mathcal{G}$-cycle $c$, at the four extreme points $0^+$ $1^-$ $1^+$ $2^-$, we can define the so-called ``outward-pointing tangent vectors''. For example, $dg_0 (c_1)^{\prime}(0)$ would be the outward-pointing tangent vector at $0^+$, denoted $-c^{\prime}(0^+)$ (the negative sign comes from outward-pointing), and $d(g_{n_-})^{-1} (c_n)^{\prime}(1)$ would be the outward-pointing tangent vector at $1^-$, denoted $c^{\prime}(1^-)$. The following definition imitates the definition of stable 1-cycles in manifolds by \cite{NR04}.

\begin{defn}
	An orbifold 1-cycle $[c]$ is stable if, 
 
(i) $[c]$ is a pair of orbifold closed geodesic if $[c]\in\Gamma_1$.

(ii) $[c]$ is an orbifold closed geodesic over the domain $[0,2]$ if $[c]\in\Gamma_2$.

(iii) $[c]$ is geodesic everywhere except at $t=0,1,2$ and the four outward-pointing tangent vectors for any representative of $c$, after divided by their norm, sum up to be $0$ if $[c]\in\Gamma_3$.

\end{defn}

It is straightforward that a non-trivial orbifold closed geodesic can be constructed from a non-trivial stable orbifold 1-cycle. The argument follows exactly as in the manifold setting \cite{NR04}*{Section 2}. Note that the shortest non-trivial stable orbifold 1-cycle is no shorter than the shortest non-trivial orbifold closed geodesic.

\section{The Birkhoff Curve Shortening Process}
\label{sec:birkhoff}

For the manifold version of the Birkhoff curve shortening process (Birkhoff process in short), we refer interested readers to \cite{CM11}. In this section, we will define the orbifold version of the Birkhoff process on $\Gamma^{\le L}$, where $\Gamma^{\le L}$ denotes the subspace of $\Gamma$ consisting of $L$-Lipschitz orbifold 1-cycles with two segments with length upper bound $L$ (thus the Birkhoff process on the space of Lipschitz orbifold loops can be defined as its restriction). Note that the Lipschitz constant $L$ and the length bound $L$ are chosen to be the same $L$. We point out here that while the manifold version of the Birkhoff process consists of four different steps, the one presented here only covers the first two, as these will be the only ones we will be using.

\subsection{Birkhoff Process \texorpdfstring{$\Psi$}{p}}\label{subsec:Birkhoff}

For a 2-dimensional compact Riemannian orbifold $\mathcal{O}$, choose an orbifold atlas $\{(X_i,q_i,V_i,\Gamma_i)\}_{i\in I}$ with $X_i$'s being convex metric balls (hence $V_i$'s will be convex metric balls as well) with the property that any two points can be connected via a unique minimizing geodesic segment. This can be achieved by first taking an arbitrary finite orbifold atlas and then restricting to small convex metric balls with the property that their injectivity radius in the ambient space is larger than its radius. Let $\delta$ be the Lebesgue number of $\{V_i\}_{i\in I}$. This $\delta$ could serve as an analogue of ``injectivity radius" on the orbifold. Consider the corresponding Riemannian groupoid of germs of change of chart $\mathcal{G}\rightrightarrows X$ equipped with a Riemannian structure inherited from the orbifold. Fix a large integer $N$ such that $N\delta>2L$ ($N\delta>L$ is enough for the Birkhoff process, but we need $N\delta>2L$ later for the deformation results in Section \ref{sec:deforms}). This will be the ``break number'' of the Birkhoff process.

The \textit{Birkhoff process} $\Psi$ is a map from $\Gamma^{\le L}$ to $\Gamma^{\le L}$ given in two steps: 

(i) Step One $\Psi^1$ (Reparametrization). Fix a $[c]\in\Gamma^{\le L}$. Let $s_1(t)$ be $\dfrac{\text{Length}[c]|_{[0,t]}}{\text{Length}[c]|_{[0,1]}}$ for $t\in[0,1]$. Then we have a function $t_1:[0,1]\rightarrow[0,1]$ defined by $t_1(s_1):=\inf\{t_1\in[0,1]:s_1(t_1)\ge s_1\}$. $[c]|_{[0,1]}\circ t$ is an orbifold free curve of constant speed on $[0,1]$. The same can be done for $[c]|_{[1,2]}$ which gives us $s_2:[1,2]\rightarrow[1,2]$. Glue together $s_1$ and $s_2$ to get $s:[0,2]\rightarrow[0,2]$. We define $\Psi^1[c]$ as $[c]\circ s$. In short, $\Psi^1[c]$ is $[c]$ reparametrized so that it has constant speed on $[0,1]$ and $[1,2]$ with no change at $t=1$. 

(ii) Step Two $\Psi^2$ (Geodesic Replacement). Fix a $[c]\in \Psi^1\Gamma^{\le L}$. Since Length$[c]|_{[\frac{i}{N},\frac{i+1}{N}]}<\frac{L}{N}<\delta$ for any $i$, we can find a representative $c=(g_i,c_i)$ for $[c]$ over the subdivision $0<\dfrac{1}{N}<\dfrac{2}{N}<...<2$. Let $\bar{c}_i$ be the unique minimizing geodesic connecting $c_i(\frac{i-1}{N})$ and $c_i(\frac{i}{N})$. The piecewise geodesic $\mathcal{G}$-cycle $(g_i,\bar{c}_i)$ represents an orbifold 1-cycle, denoted by $[\bar{c}]$, which is defined to be $\Psi^2[c]$.


One easily check that in both steps of $\Psi$ the length and the Lipschitz constant is non-increasing. Although the Birkhoff process can be defined for any $H^1$ orbifold 1-cycle
with 2 segments, we are particularly interested in the case of piecewise
geodesic orbifold 1-cycle. Denote piecewise geodesic orbifold 1-cycle with break number $N$ and each geodesic segments shorter than $\delta$ by $\Gamma_N^{\le L}$. Here the break number $N$ means that for any such orbifold 1-cycle $[c]$, orbifold free curves $[c]|_{[0,1]}$ and $[c]|_{[1,2]}$ each has at most $N-1$ non-geodesic point.

\subsection{Birkhoff Homotopy}
For each step $\Psi^i$ of the Birkhoff shortening process and each $[c]$ in the domain of $\Psi^i$, we can define a continuous homotopy $\Phi^i$ from $[0,1]\times\Gamma^{\le L}_N$ to $\Gamma^{\le L}_{3N}$ such that $\Phi^i(0,[c])=[c]$ and $\Phi^i(1,[c])=\Psi^i[c]$ for $i=1,2$. Note that the break number will have to increase from $N$ to $3N$ since more breakpoints will be introduced in our construction.

(i) Step One $\Phi^1$. Let $[c]\in\Gamma^{\le L}_N$. There exists a unique piecewise-linear non-decreasing map $P_{[c]}:[0,2]\rightarrow[0,2]$ such that $[c]=\Psi^1[c]\circ P_{[c]}$. We denote by $f_s^{[c]}$ a function from $[0,2]$ to $[0,2]$ sending $t$ to $st+(1-s)P_{[c]}(t)$. $\Phi^1(s,[c])$ is defined as $\Psi^1[c]\circ f^{[c]}_s$.

(ii) Step Two $\Phi^2$. Let $c=(c_i,g_i)$ be a representative of a constant-speed piecewise-orbifold-geodesic orbifold 1-cycle $[c]$ over the subdivision $0<\dfrac{1}{N}<\dfrac{2}{N}<...<2$. For simplicity, set $t_i=\frac{i}{N}$ and $t_i^s=st_{i-1}+(1-s)t_i$ for all $i$ and $s\in[0,1]$. We introduce $c^s_i:[t_{i-1},t_i]\rightarrow X_i$ to be the concatenation of the minimizing geodesic $\bar{c}^s_i$ between $c_i(t_{i-1})$ and $c_i(t_i^s)$ and the original $c_i$ on $[t^s_i,t_i]$. We define $\Phi^2_{\mathcal{G}}(s,c)$ as the $\mathcal{G}$-cycle $(c_i^s,g_i)$ for $s\in[0,1]$. Set $\Phi^2(s,[c]):=[\Phi^2_{\mathcal{G}}(s,c)]$. Note that $\Phi^2_{\mathcal{G}}$ is a local $\mathcal{G}$-homotopy between $c$ and $\Psi^1(c)$. Hence $\Phi^2$ is a homotopy between $[c]$ and $\Psi^2[c]$.

It is immediate from the definition that in both homotopies, the length and the Lipschitz constant are length non-increasing on $s$. The \textit{Birkhoff homotopy} $\Phi:[0,2]\times\Gamma_N^{\le L}\rightarrow\Gamma_{3N}^{\le L}$ is than defined by gluing the two homotopies. The continuity of the Birkhoff homotopy is left to Appendix A. This check is not hard but tedious because we need to match common subdivision in order to compare orbifold cycles in a common modeling neighborhood.

\section{Descent on the Steepest Direction}
\label{sec:descent}

The manifold version of the ``Descent on the Steepest Direction'' (the descent process for short) for piecewise-geodesic 1-cycles can be looked up on \cite{NR04}*{Lemma 3}.

The idea of the descent process is simple: For a piecewise-geodesic 1-cycle, at each breakpoint, we compute the sum of all outward pointing unit tangent vectors originated from this breakpoint to get a vector, called the descent vector at this breakpoint. We flow the endpoints along their corresponding descent vectors for a short period of time, then rejoin the new endpoints with unique minimizing geodesics to get a new piecewise-geodesic 1-cycle (as shown in figure \ref{fig:descent}). 

\begin{figure}
    \centering
    \includegraphics[scale=0.2]{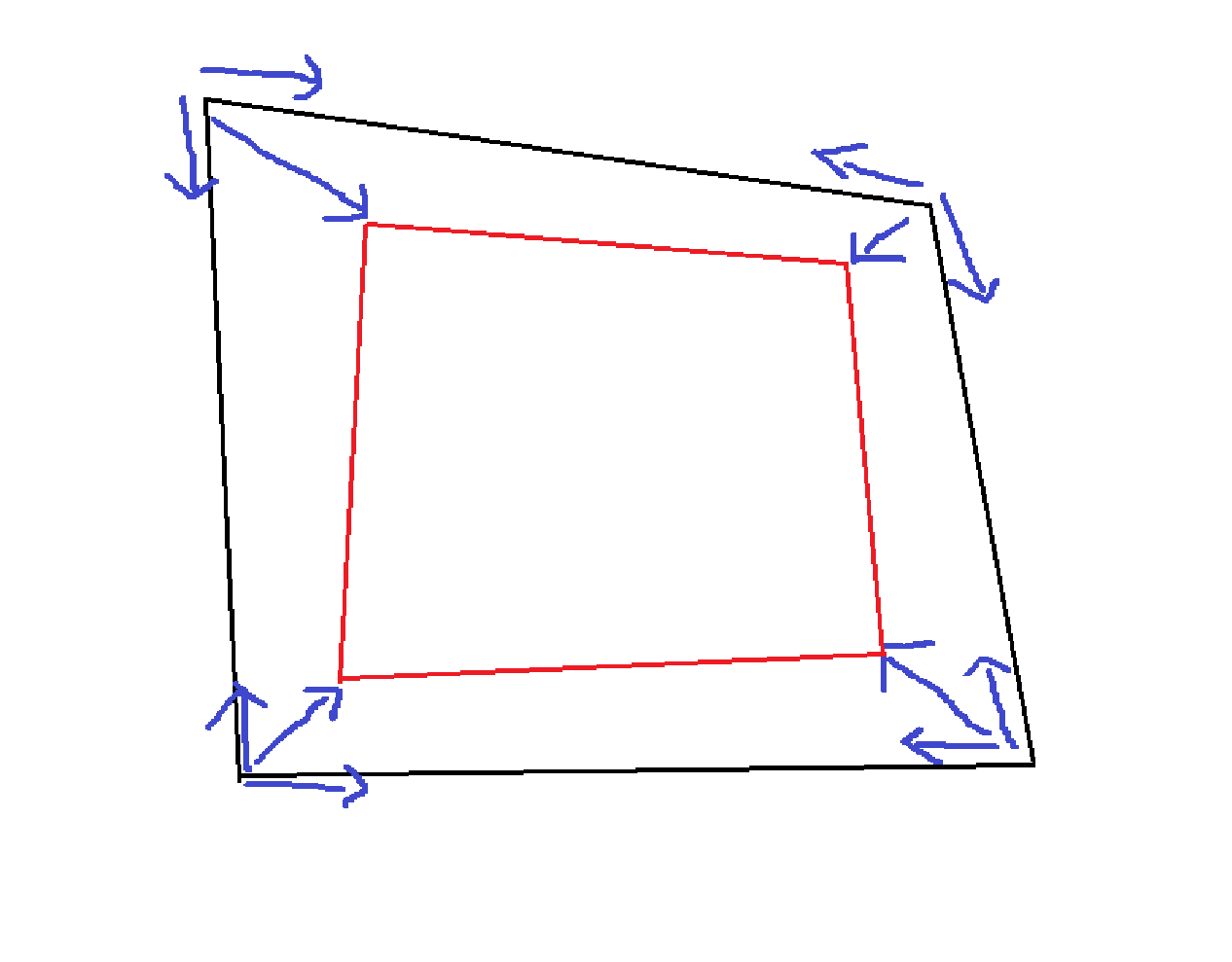}
    \caption{Descent Process}
    \label{fig:descent}
\end{figure}


\subsection{Ordered 1-Cycle Spaces}
In the last section, $\Gamma^{\le L}_N$ is defined as the space of $L$-Lipschitz piecewise geodesic orbifold 1-cycle with 2 segments with break number $N$, with length no longer than $L$, and each geodesic segment no greater than $\delta$. In order to define the descent process, we need a space of ``ordered'' piecewise geodesic orbifold 1-cycle with break number $N$ (with length bound and Lipschitz constant $L$), denoted by $\hat{\Gamma}_N^{\le L}$, which is defined as a subspace of $\Gamma_N^{\le L}\times [0,2]^{2N}$ consisting of element $([c],t_1,...,t_{2N})$, denoted by $[c]_o$, where $[c]\in\Gamma_N^{\le L}$ and $0=t_0\le t_1\le t_2\le ...\le t_{N}=1\le...\le t_{2N}=2$ is a choice of breakpoints of $[c]$, called the ordered subdivision of $[c]_o$, where ``o'' stands for ``ordered''. $\mathcal{G}$-cycle $c$ over the ordered subdivision of $[c]_o$ is called the ordered representative of $[c]_o$, if $c$ is a representative of $[c]$, for $[c]_o\in\hat{\Gamma}_N^{\le L}$. This space adopts the subspace topology of the product topology on $\Gamma_N\times [0,2]^{2N}$.


There might be multiple $[c]_o\in\hat{\Gamma}^{\le L}_N$ corresponding to one $[c]\in\Gamma^{\le L}_N$. This happens when some $t_i$ in the ordered subdivision of $[c]_o$ has multiplicity, or $[c]$ has ``fake'' breakpoints, in the sense that, for the ordered representative $c$ over the subdivision $t_0\le...\le t_{2N}$ with each $c_i$ being geodesic, some breakpoints $t=t_i$ is in fact geodesic.

Note that in \cite{NR04}, the notion of ``ordered'' 1-cycles is also implicitly needed. From now on, we will mostly focus on ordered piecewise-geodesic orbifold 1-cycles. The Birkhoff homotopy previously defined on the unordered orbifold 1-cycles admits an ``ordered'' version keeping track of breakpoints. We omit the explicit formula.

The following subspace is also be needed. $g^{\le L}_N$ is the subspace of $\hat{\Gamma}^{\le L}_N$ such that each geodesic segment is no longer than $\frac{\delta}{2}$. We will only define the descent process on $g^{\le L}_N$. 

\subsection{Descent Vector}
Let $[c]_o$ be in $g^{\le L}_N$ with an ordered representative $c=(c_i,g_i)$ over the ordered subdivision $0=t_0<...< t_{2N}=2$. Each geodesic segment $c_i$ contributes two tangent vector $v_{i-1}^+:=c_i^{\prime}(t_{i-1})$ and $v_i^+:=-c_i^{\prime}(t_i)$ at the endpoints, which after normalized will be called the unit outward-pointing tangent vectors. A descent vector at a basepoint is defined by summing all the ``translated'' unit outward-pointing tangent vector ``associated'' to this basepoint. We unpack ``translated'' and ``associated'' in the following.

Consider the equivalence relation ``merged'' on basepoints generated by the following:
(i) Basepoints $c_i(t_{i-1})$ and $c_i(t_i)$ are merged for $i$ if $c_i$ is constant. 
(ii) $c_i(t_i)$ and $c_{i+1}(t_i)$ are merged for all $i$ except for $i=N$. 
(iii) $c_0(t_0)$ is merged to $c_N(t_{N})$, $c_{N+1}(t_{N})$ is merged to $c_{2N}(t_{2N})$ if $[c]\in \Gamma^1$.
(iv) $c_0(t_0)$ is merged to $c_{2N}(t_{2N})$, $c_{N+1}(t_{N})$ is merged to $c_N(t_{N})$ if $[c]\in \Gamma^2$.
(v) all four basepoints in (iv) are all merged if $[c]\in\Gamma^3$.

A unit outward-pointing tangent vector (o-vector for short) is said to be ``associated'' to a basepoint, if its underlying basepoint is ``merged'' to it. An o-vector at a basepoint defines a ``translated'' o-vector at any associated basepoint, via the following ``translation'' maps along with their inverse and compositions:
(i) For $i$ with $c_i$ being constant, the translation map from $T_{c_i(t_{i-1})}X$ to $T_{c_i(t_i)}X$ is an identity.
(ii) Translation map $dg_i:T_{c_{i+1}(t_i)}X\rightarrow T_{c_i(t_i)}X$ for all $i$ except for $i=N$.
(iii) Translation map $d(g_{N_-}\circ g_0):T_{c_0(t_0)}X\rightarrow T_{c_N(t_{N_-})}X$ and $d(g_{2N}\circ g_{N_+}):T_{c_{N+1}(t_{N_+})}X\rightarrow T_{c_{2N}(t_{2N})}X$ if $[c]\in \Gamma^1$.
(iv) Translation map $d(g_{2N}\circ g_0):T_{c_0(t_0)}X\rightarrow T_{c_{2N}(t_{2N})}X$ and $d(g_{N_-}\circ g_{N_+}):T_{c_{N+1}(t_{N_+})}X\rightarrow T_{c_{N}(t_{N_-})}X$ if $[c]\in \Gamma^2$.
(v) All four translation map in (iii) and (iv) if $[c]\in \Gamma^3$.

The geometric picture of the orbifold descent vector is exactly the same as the manifold version. The ``translation'' step is needed since a $\mathcal{G}$-cycle is not really connected, rather it is connected via $g_i$'s. Denote the descent vector at $c_i(t_{i-1})$ by $v(c)_{i-1}^+$ and the one at $c_i(t_i)$ by $v(c)_i^-$ for all $i$. Collect the descent vectors and denote $v(c):=(v(c)_0^+,v(c)_1^-,...,v(c)_{2N}^-)$, called the descent vector for $c$.  The natural isomorphism between $T_c\Gamma$ and $T_{c^{\prime}}\Gamma$ induces natural isomorphism between $v(c)$ and $v(c^{\prime})$ for $c$ and $c^{\prime}$ different ordered representative of $[c]_o$ with geodesic segments, and denote the equivalence class $v[c]_o$. $v(c)$ defines a variation $c^s=(c_i^{s},g_i^{s})$ for $s$ small, where $c_i^s$ is the minimizing geodesic between $\exp sv(c)_{i-1}^+$ and $\exp sv(c)_i^-$. One can check that the family $[c^s]_o$ is well-defined for $v[c]_o$ for $s>0$ small, and in fact contains orbifold piecewise geodesic orbifold 1-cycles of the same type as $[c]_o$.

As in the manifold case, for any $[d]_o$ close enough to $[c]_o$, the subdivision of endpoints of $[d]_o$ into merged classes is the same or finer than that of $[c]_o$. We say that $[c]_o$ is of a higher type than $[d]_o$, if the subdivision of endpoints of $[d]_o$ into merged classes is finer than that of $[c]_o$. 

\subsection{The First Variation of Length}\label{subsec:variation}
Let the Length of $c$ be the sum of the length of all its segments. Let $v=(v_0^+,v_1^-,v_1^+,...,v_{2N-1}^+,v_{2N}^-)$ be a sequence of vectors where $v_i^+$ and $v_i^-$ are located at $c_{i+1}(t_{i})$ and $c_i(t_i)$ respectively for all $i$. This type of $v$ can be thought of as a direction for variation of $c$, constructed by flowing each $c_{i+1}(t_{i})$ and $c_i(t_i)$ along $v_i^+$ and $v_i^-$ respectively for a small period $s$, then rejoin by unique minimizing geodesic $c^s_i$ between them. Note that if no further constraints are put on $v$, then the variation by $v$ might not be a $\mathcal{G}$-cycle anymore. One can easily check that the variation by the descent $v(c)$ constructed from the previous section, at least for a short distance, produces $\mathcal{G}$-cycles, which in fact are of the same type as $c$. 

The first variation of length on $v(c)$ can be computed as follows:

$$\frac{\partial L}{\partial v(c)}=\frac{d}{ds}|_{s=0}L(c^s)=\sum_i\frac{d}{ds}|_{s=0}L(c^s_i)=\sum_i\langle -\dfrac{c_i^{\prime}(t_{i-1})}{|c_i^{\prime}(t_{i-1})|}, v(c)^+_{i-1}\rangle+\langle \dfrac{c_i^{\prime}(t_i)}{|c_i^{\prime}(t_i)|},v(c)^-_i\rangle$$

In the last sum above, $-\frac{c_i^{\prime}(t_{i-1})}{|c_i^{\prime}(t_{i-1})|}$ and $\frac{c_i^{\prime}(t_i)}{|c_i^{\prime}(t_i)|}$ are exactly the negative of the o-vector for all $i$. Let $J=\{J_1,...,J_K\}$ be the subdivision of the collection of all basepoint $\{c_1(0),c_1(t_1),...,c_{2N}(2)\}$ into merged equivalence classes. Rewrite the sum as $\sum_{k=1}^K\sum_{p\in J_k} -\langle v(c)_p,v_p\rangle$ where $v_p$ is the o-vector at $p$. Note that translation by groupoid elements does not change the inner product, we could translate all terms $-\langle v(c)_p,v_p\rangle$ for $p\in J_k$ to one specific $p_k\in J_k$, then the sum $\sum_{p\in J_k} -\langle v(c)_p,v_p \rangle$ equals $-\langle v(c)_{p_k},\sum_{p\in J_k} \tilde{v}_p\rangle$, where $\tilde{v}_p$ is the translated o-vector of o-vector $v_p$ from $p$ to $p_k$. Note that the sum in the inner product gives exactly $v(c)_{p_k}$, therefore the first variation of length gives $\dfrac{\partial L}{\partial v(c)}=-\sum_{k=1}^K||v(c)_{p_k}||^2\le 0$, therefore length non-increasing, and the equality is attained if and only if $v(c)$ is trivial. It is straightforward to check that $\frac{\partial L}{\partial v[c]_o}[c]_o$ is well-defined. $-\frac{\partial L}{\partial v(c)}$ ($\ge0$) is said to be the norm of $v[c]_o$. The norm of $v[c]$ vanishes if and only if the sum of all outward-pointing unit tangent vectors for all clusters are $0$, if and only if $[c]$ is a stable 1-cycle.

\subsection{The Descent Vector Fields}\label{subsec:field}

In this section, for each $[c]_o\in g^{\le L}_N\backslash \hat{\Gamma}^0$, we construct from $v[c]_o$ a descent vector field $V[c]_o$ on a neighborhood around $[c]_o$ in $\hat{\Gamma}^{\le L}_N$. 

Let $c$ be an ordered representative of $[c]_o$, and $v(c)$ the descent vector of $c$. Let $X_i$ be the component of $X$ where $c_i$ lies. The descent vectors at breakpoints $v(c)_{i-1}^+$ and $v(c)_i^-$ extend to vector fields $V(c)_{i-1}^+$ and $V(c)_i^-$ on $X_i$ by parallel translation along minimal geodesic for any $i$. Let $V(c)$ be $(V(c)_0^+,V(c)_1^-,...,V(c)_{2N}^-)$. Let $[d]_o$ be close to $[c]_o$, with modeling representative $d$ in a modeling neighborhood around $[c]$ with respect to $c$. Since $d$ is a modeling representative with respect to $c$, it has the ordered subdivision of $[c]_o$ rather than that of $[d]_o$. However by transition operation on the difference between the two subdivisions, we could obtain a new representative that is an ordered representative of $[d]_o$, still denoted as $d$. Then $[d]_o\rightarrow [c]_o$ if and only if $d=(d_i,h_i)$ with ordered subdivision $r_0\le...\le r_{2N}$ satisfies that $d_i\rightarrow c_i$, $h_i\rightarrow g_i$, $r_j\rightarrow t_j$, for any $i$ and $j$. Let $d^s_{i-1,+}$ be the exponential of $V(c)_{i-1}^+$ at $d_i(r_{i-1})$ for time $s$, and $d^s_{i,-}$ be the exponential of $V(c)_i^-$ at $d_i(r_i)$ for time $s$. Let $d^s$ be $(d^s_i,h_i^s)$ where $d^s_i$ is the minimizing geodesic connecting $d^s_{i-1,+}$ and $d^s_{i,-}$, and $h^s_j$ be the germ of the extension of $g_j$ at the new endpoints, for all $i$ and $j$. 

One easily check that the subdivision of endpoints of $d$ into merged classes is finer than that of $c$, if $[d]_o$ is close enough to $[c]_o$. Therefore the descent vector fields in $V(c)$ flow merged endpoints of $d$ to merged endpoints of $d^s$ for any small $s$, which implies $d^s$ is still a $\mathcal{G}$-cycle and is of the same type as $d$. Also, the linear combination of descent vector fields also flows $\mathcal{G}$-cycle to $\mathcal{G}$-cycle of the same type.

Let $V(c)_d$ be $(V(c)|_{d_1(r_0)},V(c)|_{d_1(r_1)},...,V(c)|_{d_{2N}(r_{2N})})$ the system of $2N$ tangent vectors on $X$ we used to flow $d$. $V(c)_d$ corresponds to a piecewise Jacobi field on $d$, which corresponds to a tangent vector in $T_d\Gamma$, still denoted by $V(c)_d$. $T_d\Gamma$ can be identified as a neighborhood of based orbifold 1-cycles $[[\exp_dT^{\epsilon}_d\Gamma]]$. Recall from Section \ref{subsec:modeling}, $[[\exp_dT^{\epsilon}_d\Gamma]]$ after quotient by the modeling group can be identified with $\exp_{[d]}^{\epsilon}T_{[d]}\Gamma$. In other words, $V(c)_d$ needs to be modeling group invariant so that $[V(c)_d]$ $[d^s]$ is well-defined.

\begin{lem}
    $V(c)_d$ is invariant under the modeling group action.
\end{lem}
\begin{proof}
    We will prove for $[d]\in\Gamma_1$, same method applies for $[d]$ of other types. This is where we use the orientability of $\mathcal{O}$. In such 2-orbifold, isotropy groups in $\mathcal{O}$ are finite group by rotations.
    
    Suppose $[d]|_{[0,1]}$ and $[d]|_{[1,2]}$ are both non-constant, with a point $[d](\tau)$ for some $\tau\in[1,2]$ with a distance more than $K\epsilon$ away from the basepoint of $[d]|_{[0,1]}$. Here $K$ is chosen to be the inverse of $\sin(\frac{2\pi}{3|\mathcal{G}_{0^+}|})$. Assume $\tau$ is in the domain of $h_1$, if not, we could switch the representative to one with $\tau$ in $h_1$, since $K\epsilon$ can be chosen to be smaller than $\delta$. Let $d^{\prime}=(d^{\prime}_i,h_i^{\prime})$ be a modeling representative of $[d^{\prime}]\in\exp_{[d]}T^{\epsilon}_{[d]}\Gamma$. The basepoint permutation action by $(g_1,g_2)\in\mathcal{G}_{0^+}\times\mathcal{G}_{1^+}$ on $\exp_dT^{\epsilon}_d\Gamma$ maps $d^{\prime}=(h^{\prime}_0,d^{\prime}_1,...,h^{\prime}_{N^-},h^{\prime}_{N_+},...,h^{\prime}_{2N})$ to $(g_1,g_2).h^{\prime}=(g_1\circ h^{\prime}_0,d^{\prime}_1,...,h^{\prime}_{N^-}\circ g_1^{-1},g_2\circ h^{\prime}_{N_+},...,h^{\prime}_{2N}\circ g_2^{-1})$, where $g_1$ and $g_2$ are treated as their local isometry extension. We will prove that the only subgroup of $\mathcal{G}_{0^+}\times\mathcal{G}_{1^+}$ that fixes  $[[\exp_dT^{\epsilon}_d\Gamma]]$ is $\{e\}$. If not, $[[(g_1,g_2).h^{\prime}]]\in [[\exp_dT^{\epsilon}_d\Gamma]]$ for some non-trivial $(g_1,g_2)$, then we can find another representative for $[[(g_1,g_2).h^{\prime}]]$ with segment $h^{\prime\prime}$ in $\exp_dT^{\epsilon}_d\Gamma$. If $\epsilon$ is small, then $h^{\prime\prime}_i=h^{\prime}_i$ for all $i$ by discrete-ness of similar groupoid element in \`Etale groupoid (two groupoid elements are similar if there is groupoid element connected their sources). If the component $X_1$ where $d_1$ lies is flat, it is an elementary Euclidean geometry exercise to show that the distance from $h^{\prime\prime}_1(\tau)$ to $h_1^{\prime}(\tau)$ is more than $\epsilon$ by our choice of $K$. In the non-Euclidean case, by small-ness of $\epsilon$, the length distorsion is minimal. Therefore $g_1$ can only be the identity $e\in\mathcal{G}_{0^+}$. The same argument applies for $g_2$. Which contradicts the assumption that $(g_1,g_2)$ is non-trivial. Therefore, in this case $V(c)_d$ is modeling group invariant.

    Now suppose $[d]|_{[0,1]}$ or $[d]|_{[1,2]}$ is constant. WLOG, let it be the former. Since the subdivision of endpoints of $[d]_o$ is finer than that of $[c]_o$, $[d]|_{[0,1]}$ can only be constant if $[c]|_{[0,1]}$ is constant, in which case $v(c)|_{[0,1]}$ vanishes everywhere. The also vanishing $V(c)|_{[0,1]}$ is invariant under $\mathcal{G}_{0^+}$, since a vanishing vector field is invariant under any group action. The same argument applies if $[d]|_{[1,2]}$ is also constant. If it is not constant, then the argument in the previous paragraph forces $\mathcal{G}_{1^+}$ to be trivial.
\end{proof}

Define $V[c]_o|_{[d]_o}$ as the tangent vector $([V(c)|_d],0,...,0)$ on $\hat{\Gamma}^{\le L}_N$. Here the $0$'s mean that the ordered breakpoints $r_i$'s of $[d]_o$ are unchanged. The flow of $V[c]_o$ does not exceed $\Gamma^{\le L}_N$ for a short period of time since it starts from $g^{\le L}_N$. 


\subsection{Properties of Descent Process}


Let $v=(v_i)_{\{i=0^+,1^-,...,2N^-\}}$ be a vector system which contains one vector at each endpoint of ordered piecewise geodesic orbifold 1-cycle. We also label the descent vector $v(c)=\{v(c)_i\}$ as above. Recall the first variation of length formula for $c$ on the direction of vector sequence $v=(v_i)$ ($i=0^+,...,2N^-$):

$$\dfrac{\partial L}{\partial v}=\sum_i^N\langle v_{(i-1)^+} ,-e_{i,1}\rangle + \langle v_{i^-} ,e_{i,2}\rangle$$

Here denote by $e_{i,1}$ and $e_{i,2}$ the negative of the o-vector at the left endpoint and the right endpoint, respectively, of the segment $c_i$ for all $i$. We will compute $\dfrac{\partial L}{\partial v}$ for $d$ where $v=V(c)|_{d}$. 

\begin{lem}\label{lem:continuitylength}
    $\frac{\partial L}{\partial V(c)|_{d}}\rightarrow \frac{\partial L}{\partial v(c)}$ as $[d]_o\rightarrow [c]_o$. 
\end{lem}
\begin{proof}
WLOG, assume $[d]_o$ is of no higher type than $[c]_o$. In this case, we might have one of the two cases for $i$'s: (i) $c_i$ of $c$ is constant while $d_i$ in $d$ is not, (ii) and $c_i$ and $d_i$ are both constant or both non-constant. For $i$ with $c_i$ being constant while $d_i$ is not, the corresponding term in the sum is $\langle V(c)_{i^-}|_{d_{i^-}},-e_{i,2}(d)\rangle+\langle V(c)_{(i-1)^+}|_{d_{(i-1)^+}},e_{i,1}(d)\rangle$. Note that $V(c)_{i^-}=V(c)_{(i-1)^+}$ since $c_i$ is constant. The extra non-zero term converges to $0$ by smooth-ness of $V(c)_{i^-}$ and small-ness of $d_i$. On the other hand, for $i$ with $c_i$ and $d_i$ are both constant or both non-constant, we have $V(c)_i|_{d}\rightarrow v(c)_i$, $e_{i,1}(d)\rightarrow e_{i,1}(c)$, and $e_{i,2}(d)\rightarrow e_{i,2}(c)$.
\end{proof}

Recall that $||v[c]_o||=0$ if and only if $[c]$ is stable. Under the assumption that $L$ is less than $\ell(\mathcal{O})$, $||v[c]_o||=0$ for $[c]_o\in\hat{\Gamma}^{\le L}_N$ if and only if $[c]_o$ is of zero length. Combing that with Lemma \ref{lem:continuitylength}, we immediately have the following.

\begin{lem}\label{lem:fields}
    Fix any $\epsilon$. For any $[d]_o\in g^{\le L}_N\backslash g^{<\epsilon}_N$, there exists a neighborhoods $W_{[d]_o}$ such that for any $j$, the descent vector field $V[d]_o$ can be defined on $W_j\subset \hat{\Gamma}^{\le L}_N$, and for any $[c]_o\in W_{[d]_o}$, $-\frac{\partial L}{\partial V[d]_o}|_{[c]_o}>\frac{||v[d]_o||}{2}$. 
\end{lem}

In fact, up to restricting the neighborhood $W_{[d]_o}$ in the above Lemma, we can require that for $[d]_o$ and for any $[c]_o\in W_{[d]_o}$, flowing $[c]_o$ along any normalized vector field in $\hat{\Gamma}^{\le L}_N$ for a fixed amount of time $s$ does not exceed $\hat{\Gamma}^{\le L}_N$, where the norm of a vector on $\hat{\Gamma}^{\le L}_N$ is computed via the absolute value of its first variation of length. $s$ can simply be chosen as $\frac{\delta}{2}$, since within this timeframe no geodesic segment can go from shorter than $\frac{\delta}{2}$ (which corresponds to requirement on elements in $g^{\le L}_N$) to longer than $\delta$. 



\begin{lem}\label{lem:finite}
    A finite cover of $g^{\le L}_N\backslash g^{<\epsilon}_N$ can be chosen from the collection of all $W_{[d]_o}$ for $[d]_o\in g^{\le L}_N\backslash g^{<\epsilon}_N$. 
\end{lem}

\begin{proof}
Recall that in $X=\sqcup X_i$, $X_i$ is the modeling (manifold) neighborhood in an orbifold chart of $\mathcal{O}$ for any $i$. Expand each orbifold chart by a bit so that the modeling neighborhood in the new chart contains the closure $\bar{X}_i$ for all $i$. Let $\bar{X}$ by $\sqcup \bar{X}_i$ and $\bar{\mathcal{G}}$ be the groupoid associated to the new atlas restricted to objects in $\bar{X}$. By compactness of $\mathcal{O}$ and finite-ness of our choice of atlas, $\bar{\mathcal{G}}$ and $\bar{X}$ are compact. For $[c]_o\in \hat{\Gamma}^{\le L}_N$ with an ordered representative $c$, we identify $c=(g_0,c_1,...,g_{2N})$ with an element in the compact finite-dimensional space $S:=(\bar{\mathcal{G}})^{2N+2}\times \bar{X}^{4N}\times[0,2]^{2N}$, by sending $c=(g_0,c_1,...,g_{2N})$ with ordered subdivision $0=t_0\le...\le t_{2N}$ to $(g_0,g_1,...,g_{2N},c_1(t_0),c_1(t_1),c_2(t_1),...,c_{2N}(t_{2N}),\\t_0,...,t_{2N})$. On the other hand, for an element in $S$, if the components $\bar{\mathcal{G}}^{2N+2}$, $\bar{X}^{4N}$, and $[0,1]^{2N}$ are carefully matched, it can corresponds to an ordered piecewise geodesic orbifold 1-cycle by rejoining the points in $\bar{X}^{4N}$ pairwise via minimizing geodesic. Denote the collection of such elements in $S$ by $S^{\prime}$. One easily check that $S^{\prime}$ is closed, hence compact in $S$. The problem is that an element in $S^{\prime}$ could corresponds to an orbifold 1-cycle in $\Gamma^i$ for $i=1,2,3$. To fix this, we consider $S^{\prime\prime}=S^{\prime}\sqcup S^{\prime}\sqcup S^{\prime}$, from which a map to $\hat{\Gamma}^{\le L}_N$ can be defined.

For the collection $\{W_{[d]_o}\}$, $W_{[d]}$ is the exponential of a subset of a modeling neighborhood. For any ordered representative $d$ of $[d]_o$, let $W_d$ be the subset of the modeling neighborhood $\exp_d T^{\epsilon_d}_dT_d\Gamma$ that exponentiate to $W_{[d]_o}$. $\{W_d\}$ is an open cover of $S^{\prime\prime}$, which admits finite subcover, which corresponds to a finite subcover of $\{W_{[d]_o}\}$.

\end{proof}

\section{Deformations Results}
\label{sec:deforms}




\subsection{Homotopy from \texorpdfstring{$\hat{\Gamma}^{\le L}_N$}{gLN} to \texorpdfstring{$\hat{\Gamma}^{\le \epsilon}_N$}{geN}}
\label{subsec:deform1}

The following deformation result is inspired by \cite{NR04}*{Lemma 3}. The idea is to apply the Birkhoff process and the descent process alternatively. 

\begin{thm}\label{thm:deform1}
	Let $\mathcal{O}$ be a compact 2-orbifold homeomorphic to $S^2$. Let $L$ be a positive number less than the length of the shortest non-trivial orbifold closed geodesic on $\mathcal{O}$. Let $f:[0,1]\rightarrow \hat{\Gamma}^{\le L}_N$ be a continuous map, then a continuous homotopy $H:[0,1]\times[0,1]\rightarrow \hat{\Gamma}^{\le L}_{3N}$ can be constructed, such that $H(\cdot,0)=f$ and $H(\cdot,1):[0,1]\rightarrow \hat{\Gamma}^{<\epsilon}_N$ for any $\epsilon>0$.
\end{thm}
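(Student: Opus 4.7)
The plan is to combine the Birkhoff process $\Psi$ of Section \ref{sec:birkhoff} with the descent on the steepest direction (to be constructed earlier in this section) into a single length-non-increasing flow, and to iterate the flow until every cycle in the family drops below length $\epsilon$. The hypothesis $L<l(\mathcal{O})$ enters through the stability lemma of Section \ref{sec:cycles}: a non-trivial stable 1-cycle with 2 segments of length at most $L$ would yield a closed orbifold geodesic of length at most $L$, contradicting $L<l(\mathcal{O})$. Since stable cycles are precisely the common fixed points of $\Psi$ and the steepest descent, every non-trivial cycle in $\hat{\Gamma}^{\le L}_N$ must be strictly shortened by one application of the combined flow.

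The first step I would carry out is a quantitative shortening estimate: for each $\epsilon>0$ there exists $\eta=\eta(\epsilon)>0$ such that for every $[c]_o\in\hat{\Gamma}^{\le L}_N$ with $\Length(p_o[c]_o)\ge\epsilon$, one combined step (Birkhoff followed by a short descent) decreases length by at least $\eta$. The argument is by contradiction and compactness. Otherwise a sequence $[c^j]_o$ of cycles of length at least $\epsilon$ would have shortening tending to $0$; by Arzel\`a-Ascoli applied to the $L$-Lipschitz segments (valued in a compact orbifold) together with compactness of the breakpoint tuple space $[0,2]^{2N}$, a subsequence converges in the pointwise topology of $\hat{\Gamma}_N$ to a limit that is a simultaneous fixed point of both processes, hence stable. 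The stability lemma then forces this limit to be trivial, contradicting the length lower bound $\epsilon$.

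With $\eta$ in hand, I would assemble $H$ as follows. Pick an integer $K$ with $K\eta>L$ and partition the $H$-time coordinate $[0,1]$ into $K$ equal subintervals. On the $k$-th subinterval, concatenate the Birkhoff homotopy $\Phi$ of Section \ref{sec:birkhoff} on the first half with a homotopy along a small multiple of the steepest-descent vector field on the second half. Both pieces are continuous in the cycle (by the continuity results of Section \ref{sec:birkhoff} and the continuity of descent established earlier in this section on $\hat{\Gamma}_N$), continuous in the family parameter, and preserve both the length bound $L$ and the Lipschitz constant $L$; together they produce cycles with at most $3N$ breakpoints, so the whole homotopy lands in $\hat{\Gamma}^{\le L}_{3N}$. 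After $K$ iterations every cycle has length below $\epsilon$, so one leaves the family constant on any remaining time and at $H$-time $1$ the family lies in $\hat{\Gamma}^{\epsilon}_N$.

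The main obstacle is maintaining continuity across the non-Hausdorff strata of $\Gamma$ described in Section \ref{sec:cycles}: as the family parameter varies, $f(s)$ may transition between $\hat{\Gamma}^1$, $\hat{\Gamma}^2$, $\hat{\Gamma}^3$ and $\hat{\Gamma}^0$. Working on the ordered space $\hat{\Gamma}_N$ rather than on $\Gamma_N$ is precisely the device that makes this tractable, because the ordered tuple of breakpoints records the basepoint collisions causing type changes and lets the steepest-descent field be defined continuously across each such transition. A secondary subtlety is that the descent direction has to remain defined and continuous when breakpoints collide and intermediate segments degenerate to a point; here the convention used in defining the unit-direction map $e$ (which is set to $0$ on the zero vector) keeps the descent field continuous and, crucially, ensures that the quantitative decrease $\eta$ is uniform across all four types.
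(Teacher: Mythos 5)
Your overall strategy matches the paper's: alternate the Birkhoff process with a descent along the steepest direction, obtain a uniform shortening per round by a compactness argument that uses the hypothesis $L<l(\mathcal{O})$ via the stability characterization, and iterate until length falls below $\epsilon$. The compactness step you sketch (sequence with vanishing shortening $\Rightarrow$ convergent subsequence $\Rightarrow$ stable limit $\Rightarrow$ contradiction) is essentially the content of the paper's Lemmas on converging sequences and the lower bound $\delta'>0$ on $\|v[c]_o\|$, so that part of the plan is sound, modulo the caveat that the pointwise limit in $\hat{\Gamma}_N$ is not unique (the paper passes to a Hausdorffication $\tilde\Gamma_N$ and chooses the limit carefully, and only obtains an approximate inequality $\|v[c]_o\|\le 2^{2N+1}\lim\|v[c^j]_o\|$ across type changes).

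The genuine gap is your treatment of ``the steepest-descent vector field'' as a single continuous vector field on $\hat{\Gamma}_N$ along which one can simply flow. It is not one. The descent vector $v[c]_o$ at each endpoint is a sum of \emph{nearest nonconstant} outward unit tangents (the definitions of $v_{l,i}$ and $v_{r,i}$ reference $l_i$ and $r_i$, the indices of the nearest nonconstant segments), so when a segment degenerates to a point its contribution is abruptly dropped from the sum and $v[c]_o$ can jump. The convention you cite for $e$ (sending $0\mapsto 0$) appears only in the definition of \emph{stability}, not in the descent vector, and does not make $[c]_o\mapsto v[c]_o$ continuous across type transitions. Consequently ``a homotopy along a small multiple of the steepest-descent vector field'' is not directly well-defined on the image of $f$. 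The paper's mechanism for this is essential: it freezes $v[c]_o$ at a reference point, extends it by parallel transport to a smooth local vector field $V[c]_o$ on a small modelling neighborhood $U_{[c]_o}$ (separately treating principal and singular cases), proves that the flow of $V[c]_o$ preserves type and still has first variation of length bounded by $\|v[c]_o\|/2$ on $U_{[c]_o}$, and then covers $g^{\le L}_N\setminus\hat\Gamma^{<\epsilon}$ by finitely many such neighborhoods $W_j$ and patches the local fields by a partition of unity subordinate to a fine cover of the parameter interval $[0,1]$. Two further consequences you should address: the patched descent flow may exit $g^{\le L}_N$ into $G^{\le L}_N$, which is exactly why the Birkhoff homotopy must be reapplied between descent steps to re-enter $g^{\le L}_N$; and once part of the family drops below length $\epsilon$ it may leave all the $W_j$'s, so the local vector fields on those pieces must be replaced by $0$ while the rest of the family keeps shrinking. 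Without the local-field-plus-partition-of-unity construction your $H^2$ step does not compile, so this is the missing core of the argument rather than a finishing detail.
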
 

Here we identify an ordered piecewise geodesic orbifold 1-cycle $[c]_o$ in $\hat{\Gamma}^{\le L}_N$ with a $[d]_o$ in $\hat{\Gamma}^{\le L}_{3N}$, if $[c]$ and $[d]$ are the same orbifold 1-cycle and the breakpoints of $[d]$ are exactly three copies of the break points of $[c]$.

\begin{proof}
Start with a step of Birkhoff homotopy of break number $N$ so that $f$ is homotoped into $g^{\le L}_N$. For simplicity of notation, still denote the time-one endpoint of this homotopy $f$.

Fix an $\epsilon>0$. Let orbifold 1-cycle $[d^j]_o$, descent vector fields $V[d^j]_o$ $V_j$, domain $W_j$ for all $j\in J$, $||V||$, and $s$ be as in Lemma \ref{lem:fields} and the paragraph following it, corresponding to $\epsilon$, where $J$ is a finite index set whose existence is guaranteed by Lemma \ref{lem:finite}. Let $||V||$ be $\max_{j\in J}\{\frac{||v[d^j]_o||}{2}\}$.  Let $\{I_i\}_{i\in I}$ be an open cover over $[0,1]$ such that, for any $i\in I$ $f(I_i)\subset W_{j_i}$ for some $j_i\in J$. Let $\{\xi_i\}$ be a partion of unity subordinate to $\{I_i\}$. Define $V_f=\sum_{i\in I}\xi_i V_{j_i}$ a descent vector field on $f([0,1])$. Since $||V_f||\le ||V||$ and $f\subset g^{\le L}_N$, for any $i\in I$ and $[c]\in f(I_i)$ flowing it along $V_f$ for time $s_V:=\frac{s}{2||V||}$ does not exceed $\hat{\Gamma}^{\le L}_N$. By linearity of the first variation of length, the first variation of length on the direction of $\sum_{i\in I}\xi_i V_{j_i}$ is still bounded from above by $-||V||$. Thus the second step of the homotopy will be constructed as flowing along $V_f$ for time $s_V$.

This second step decreases the maximal length by at least $||V||s_V$ after flow time $s_V$. We cannot directly re-iterate this step two since $V_j$'s in the construction are defined on $g^{\le L}_N$ while we only know that after the flow in step two, the orbifold 1-cycles are in $\hat{\Gamma}^{\le L}_N$. This will be solved by applying the Birkhoff homotopy again. Thus we can iterate descent process by $V_f$ and Birkhoff homotopy until $f([0,1])$ reaches $\hat{\Gamma}^{<\epsilon}_N$. The re-iteration cannot proceed since the construction of open cover $I$ requires $f([0,1])$ to be entirely contained by $\hat{\Gamma}^{\le L}_N\backslash\hat{\Gamma}^{< \epsilon}_N$. In this case we modify the construction of open cover $\{I_i\}_{i\in I}$ so that it consists of $I_i$ with $f(I_i)\subset V_{j_i}$ and an $I_0$ with $f([0,1])\cap \hat{\Gamma}^{<\epsilon}_N\subset f(I_0)\subset\hat{\Gamma}^{<2\epsilon}_N$ for all $i\in I$. $V_f$ is constructed as $\sum_{i\in}\xi_iV_{j_i}$ where $V_{j_i}$ is set to be $0$ if $f(I_i)\subset \hat{\Gamma}^{<2\epsilon}_N$. Then the flow of $V_f$ shrinks orbifold 1-cycles outside of $f(I_0)$ by at least $\frac{\delta}{2}s$ after flow time $s$, and shrinks $f(I_0)$ by some amount that does not matter. Re-iteration of this step shrink $f([0,1])$ all the way below $\hat{\Gamma}^{<2\epsilon}_N$. This finishes the proof, since the $\epsilon$ we started with could have been set as $\frac{\epsilon}{2}$.

\end{proof}


\subsection{Homotopy to Constant Cycles}

Let $\tilde{\Gamma}$ be the classical 1-cycle space on $|\mathcal{O}|$. Let $|[c]|$ be the classical 1-cycle corresponding to orbifold 1-cycle $[c]\in\Gamma$, constructed by simply by looking at the image of $[c]$ on $|\mathcal{O}|$. Let $\tilde{\Gamma}^i$ be the projection image of $\Gamma^i$ by $|\cdot|$.

\begin{thm}\label{thm:deform2}
	Let $\mathcal{O}$ be a compact 2-orbifold homeomorphic to $S^2$. Let $L$ be a positive number less than the length of the shortest non-trivial orbifold closed geodesic on $\mathcal{O}$. Let $f:[0,1]\rightarrow\hat{\Gamma}^{\le L}$ be a continuous map. Then there exists a homotopy $H:[0,1]\times[0,1]\rightarrow \tilde{\Gamma}$ such that $H(0,\cdot)=|f(\cdot)|$ and $H(1,\cdot)\subset\tilde{\Gamma}^{0}$.
\end{thm}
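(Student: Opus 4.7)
My plan is to use Theorem \ref{thm:deform1} to first shrink $f$ to a family of very short orbifold 1-cycles, project via $\tilde{p}$ into $\tilde{\Gamma}$, and then contract each short cycle to a constant by a geodesic retraction. The three time-units of $\tilde{H}$ split naturally into a shrinking phase on $[0,1]$ and a retraction phase on $[1,3]$.

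For the shrinking phase, I would first fix $\epsilon > 0$ small enough that every orbifold 1-cycle of total length less than $\epsilon$ has each of its two segments contained in a convex metric ball of the atlas $\{V_i\}$; such $\epsilon$ exists because convexity radii are uniformly bounded below on the compact $\mathcal{O}$. Applying Theorem \ref{thm:deform1} to $f$ with this $\epsilon$ produces a continuous homotopy $H : [0,1] \times [0,1] \to \hat{\Gamma}^{\le L}_{3N}$ with $p_o \circ H(\cdot, 0) = f$ and $H(\cdot, 1) \subset \hat{\Gamma}^{<\epsilon}_N$. I then set $\tilde{H}(s, t) := \tilde{p}(H(t, s))$ for $s \in [0,1]$; continuity of $\tilde{p}$ yields continuity of $\tilde{H}$ on $[0,1]^2$, with $\tilde{H}(0,\cdot) = \tilde{p} \circ f$ and each $\tilde{H}(1,t)$ a short 1-cycle in $|\mathcal{O}|$.

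For the retraction phase $s \in [1,3]$, let $a_t := \tilde{H}(1,t)(0) \in |\mathcal{O}|$ and $b_t := \tilde{H}(1,t)(2) \in |\mathcal{O}|$; these depend continuously on $t$ by the pointwise topology on $\tilde{\Gamma}$ (Lemma regarding pointwise convergence). I define $\tilde{H}(s, t)(x)$ to be the point at fraction $(s-1)/2$ along the unique minimizing geodesic from $\tilde{H}(1,t)(x)$ to $a_t$ when $x \in [0,1]$, and from $\tilde{H}(1,t)(x)$ to $b_t$ when $x \in [1,2]$. These geodesics are well-defined by lifting into a chart $X_i$ (where any convex ball has unique minimizing geodesics depending smoothly on both endpoints) and projecting back via $q_i$. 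At $s = 3$ the $[0,1]$-segment collapses to the constant $a_t$ and the $[1,2]$-segment to $b_t$, placing $\tilde{H}(3,t)$ in $\tilde{\Gamma}^0$.

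The key observation making this work uniformly across the three types $\tilde{\Gamma}^1, \tilde{\Gamma}^2, \tilde{\Gamma}^3$ is that a pointwise retraction is single-valued: any two of the distinguished points $c(0), c(1^-), c(1^+), c(2)$ that already coincide in the cycle remain coincident after the retraction, so the equalities defining each type are preserved throughout $s$. This uniformity is essential because the type of $\tilde{p} \circ f(t)$ may change with $t$ (for instance, a family may pass through $\tilde{\Gamma}^3$ while transitioning between $\tilde{\Gamma}^1$ and $\tilde{\Gamma}^2$), and we need a formula that does not break at type transitions. The main obstacle I anticipate is continuity of the geodesic retraction as the targets $a_t, b_t$ cross the singular locus of $\mathcal{O}$ or move between overlapping charts; this is resolved by carrying out the retraction inside the smooth Riemannian chart $X_i$ (where the needed smoothness in endpoints is standard Riemannian geometry) and invoking the isometric compatibility of overlapping charts to glue the constructions continuously on $|\mathcal{O}|$.
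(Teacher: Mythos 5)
Your shrinking phase via Theorem~\ref{thm:deform1} and projection by $\tilde{p}$ matches $\tilde{H}^1$ in the paper exactly. The retraction phase, however, is genuinely different from the paper's, and it has a gap.

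The paper's retraction proceeds in two stages. First ($\tilde{H}^2$), because the family of short cycles has $1$-dimensional image, one can choose a regular point $p \in |\mathcal{O}|$ far from the singular locus and from all the basepoints, and then perturb the cycles radially away from $p$ so that the entire family lies in $|\mathcal{O}| \setminus \{p\} \cong \mathbb{D}^2$. Second ($\tilde{H}^3$), one applies an \emph{ambient} deformation retraction of $\mathbb{D}^2$ onto a fixed point $p_0$, dragging every cycle along with it. Because this is an ambient isotopy of the underlying space, any coincidences among $c(0), c(1^-), c(1^+), c(2)$ are automatically preserved, and the end result is a \emph{constant} family (the constancy of $\tilde{H}(3, \cdot)$ is in fact what the paper uses later in the proof of the main theorem, although the theorem statement only requires $\tilde{H}(3, \cdot) \subset \tilde{\Gamma}^0$).

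Your approach replaces both $\tilde{H}^2$ and $\tilde{H}^3$ by a single geodesic retraction of each segment to its own basepoint $a_t$ or $b_t$. The key claim, that any two of the distinguished points which coincide before the retraction remain coincident throughout, is false when a segment of the cycle winds around a cone point of $|\mathcal{O}|$. Concretely, take $\tilde{H}(1,t)$ of type $\tilde{\Gamma}^1$ whose first segment is a short loop at $a_t$ encircling a cone point $q$ of order $k \ge 2$. Lifting to a smooth chart $X_i$ as you propose, the loop lifts to an \emph{arc} from $\tilde{a}_t$ to $g \cdot \tilde{a}_t$ for a nontrivial $g \in \Gamma_i$. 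Retracting both endpoints of the lifted arc toward $\tilde{a}_t$ by geodesics in $X_i$ and projecting back, the point $\tilde{c}(1^-)_s$ sits at fraction $(s-1)/2$ along the chord from $g \cdot \tilde{a}_t$ to $\tilde{a}_t$, which for $s \in (1,3)$ is not in the orbit $\Gamma_i \cdot \tilde{a}_t$; hence $c(1^-)_s \neq a_t = c(0)_s$ in $|\mathcal{O}|$, and the set equality $\{c(0)_s, c(1^+)_s\} = \{c(1^-)_s, c(2)_s\}$ fails, so $\tilde{H}(s,t) \notin \tilde{\Gamma}$. Trying instead to retract directly in the cone metric of $|\mathcal{O}|$ fares no better: the minimizing geodesic from $c(x)$ to $a_t$ is non-unique when the angular separation across $q$ equals half the cone angle, and the choice jumps discontinuously across the cut locus as $x$ varies around the loop. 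These winding configurations are not excluded by the hypothesis $L < l(\mathcal{O})$ (a short piecewise-geodesic loop very close to a cone point has arbitrarily small length without being a closed orbifold geodesic), so they genuinely occur in the domain of the theorem.

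This is precisely why the paper inserts the perturbation step $\tilde{H}^2$: it trades a pointwise geodesic construction, which is sensitive to the cone singularities, for an ambient topological retraction that is insensitive to them. To salvage your outline you would essentially need to re-derive a version of $\tilde{H}^2$ so that the final contraction can be ambient rather than pointwise-geodesic; as written, the geodesic retraction step does not deliver a homotopy within $\tilde{\Gamma}$.
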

\begin{proof}
    The first step of the homotopy will simply be the projection of the homotopy from Theorem \ref{thm:deform1} on $|\mathcal{O}|$. Denote by $h$ a 1-family of classical 1-cycle from $[0,1]\sqcup[1,2]$ to $|\mathcal{O}|$. As the time-one endpoint of the homotopy in step one, $h\subset\tilde{\Gamma}^{\le \epsilon}_N$. 

    If there is a point $p$ that is not in $h([0,1])([0,1]\sqcup[1,2])$ the image of union of all 1-cycles in $h$ on $|\mathcal{O}|$, then a deformation retract on $|\mathcal{O}|\backslash \{p\}$, which is homeomorphic to $\mathbb{D}^2$, retract all of $h$ to some constant point, thus finishes the proof. If such point $p$ does not exist, we perform the following homotopy to create it:
    
    Consider $\cup_{t\in[0,1]}h(t)(0^+)\cup_{t\in[0,1]:h(t)\in\tilde{\Gamma}^2} h(t)(1^+)$, ``stripes of basepoints'' on $|\mathcal{O}|$. The idea of this set is the collection of basepoints $0^+$ and $1^+$ of all the 1-cycles in $h$, but discarding the basepoint at $1^+$ when $h(t)$ is not of type two loops, since in this case one basepoint is enough. ``The stripes'' is a subset of a compact 1-dimensional set on $|\mathcal{O}|$. Take a regular point $x$ in $|\mathcal{O}|$ such that $x$ is not in ``the stripes'' and $B_{3\epsilon}(x)$ does not intersect the singular set of $\mathcal{O}$, where $\epsilon$ is the $\epsilon$ from Theorem \ref{thm:deform1} as used in the first step. If $\epsilon$ was chosen small enough, neighborhood $B:=B_{3\epsilon}(x)$ is ``almost Euclidean''. 
    
    We will first consider the Euclidean case. Let $F$ be the deformation of $B_{3\epsilon}(x)$ on the radial direction that gradually pushes the two stripes out of $B_{\epsilon}(x)$ while fixing $B\backslash B_{2\epsilon}(x)$. $F$ extends to a homotopy on $|\mathcal{O}|$ by fixing everything outside the $B_{2\epsilon}(x)$. Let $G$ be the homotopy from $[0,1]$ to $\tilde{\Gamma}^{<\epsilon}$ that sends $h(t)(r)$ to $h(t)(r)+F\circ h(t)(0^+)-h(t)(0^+)$ for $r\in[0,1]$ and sends $h(t)(r)$ to $h(t)(r)+F\circ h(t)(1^+)-h(t)(1^+)$ for $r\in[1,2]$ if $h(t)\in\tilde{\Gamma}^1$, sends $h(t)(r)$ to $h(t)(r)+F\circ h(t)(0^+)-h(t)(0^+)$ for $r\in[0,1]\sqcup[1,2]$ otherwise. In simpler terms, during $G$, the basepoints carry their loops or cycles with them via linear translation, while the movement of basepoints are determined by $F$. Since the length of 1-cycles are less than $\epsilon$, after $G$, the family of 1-cycles can not reach $x$.

   As for the non-Euclidean case, we map $h(t)$ with $\exp_x^{-1}$ to get a family of 1-cycle in $B_{3\epsilon}(0)$ for those $h(t)$ with basepoint inside $B_{2\epsilon}(x)$, and do the Euclidean version of construction to get a homotopy for these 1-cycles, then map the homotopy back to the $B_{3\epsilon}(x)$ with $\exp_x$, then extend to a homotopy for the entire $h(t)$ by fixing the $h(t)$'s not affected. Since $B_{3\epsilon}(p)$ is a precompact set in the manifold part of $\mathcal{O}$, there exists a bound for the absolute value of sectional curvature everywhere, say $|sec|<R$. $\exp_x$ is $(1+R(3\epsilon)^2)-$bi-lipschitz on $B_{3\epsilon}(0)$. Since $\epsilon$ can be arbitrarily small, the length distortion of the exponential map is small. Namely, the points in 1-cycle $\tilde{H}^2(1,t):=h^1(t)$ for $t\in[1,2]$ is at least $\epsilon-\frac{(1+R(3\epsilon)^2)\epsilon}{2}$ away from $x$, which is positive if $\epsilon$ is small.

\end{proof}	

    We have the following immediate corollary.

 \begin{cor}\label{thm:cor-loopunderlying}
     Any ordered piecewise-geodesic orbifold 1-cycle with 2 segments on $|\mathcal{O}|$ with length no longer than $L$ can be homotoped to a constant 1-cycle. In particular, the same holds for ordered piecewise-geodesic loops.
 \end{cor}



\section{Proof of Theorem}
\label{sec:proof}

In this section we prove the main theorem.
\begin{theorem*}
	For any compact Riemannian 2-orbifold homeomrophic to $S^2$, denoted by $\mathcal{O}$, $l(\mathcal{O})\le 4D(\mathcal{O})$, where $l(\mathcal{O})$ is the length of the shortest non-trivial closed orbifold geodesic, and $D(\mathcal{O})$ is the diameter.
\end{theorem*}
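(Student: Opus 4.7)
I plan to prove the theorem by contradiction. Suppose $l(\mathcal{O}) > 4D(\mathcal{O})$, and fix $L$ with $4D(\mathcal{O}) \le L < l(\mathcal{O})$. The goal is to construct a specific continuous 1-parameter family $f : [0,1] \to \hat\Gamma^{\le L}_N$ with $f(0), f(1) \in \hat\Gamma^0$, such that the underlying family $\tilde p \circ f$ is not relatively null-homotopic in $(\tilde\Gamma, \tilde\Gamma^0)$. Given such $f$, applying Theorem~\ref{thm:deform2} produces a continuous homotopy $\tilde H : [0,3] \times [0,1] \to \tilde\Gamma$ with $\tilde H(0, \cdot) = \tilde p \circ f$ and $\tilde H(3, \cdot) \subset \tilde\Gamma^0$, which \emph{is} exactly such a relative null-homotopy, yielding a contradiction. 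Hence $l(\mathcal{O}) \le 4D(\mathcal{O})$.

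For the construction, pick $p, q \in |\mathcal{O}|$ realizing $d(p,q) = D(\mathcal{O})$ and a minimizing orbifold geodesic $\gamma$ from $p$ to $q$. Adapting the Nabutovsky--Rotman sweepout on 2-spheres, I produce a 1-parameter family of loops $\{\ell_s\}_{s \in [0,1]}$ on $|\mathcal{O}|$ with $\ell_0, \ell_1$ constant, each of length at most $4D(\mathcal{O})$, whose associated ``capped'' map $\hat F : S^2 \to |\mathcal{O}|$ (obtained by identifying the endpoint constant loops with points on $|\mathcal{O}|$) has degree $1$. The idea is to use a Morse function $h$ on $|\mathcal{O}|$ with minimum $p$ and maximum $q$, and split each level-set loop at its generic intersection with $\gamma$, producing two arcs whose endpoints lie on $\gamma$; the triangle inequality applied to these endpoints gives arc length at most $2D(\mathcal{O})$, hence total length at most $4D(\mathcal{O})$. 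Realize each $\ell_s$ as an orbifold 1-cycle of type $\Gamma_2$ by marking the split point as the intermediate breakpoint, and lift through the groupoid $\mathcal{G}$ to obtain $f(s) \in \hat\Gamma^{\le L}_N$ for a suitable $N$.

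The non-triviality of $\tilde p \circ f$ as a relative class follows from a degree argument: a relative null-homotopy $\tilde H$ would deform $\hat F$ to a map factoring through a 1-dimensional quotient of $S^2$ (the one where all loops in the family have collapsed to constants), forcing $\deg \hat F = 0$ and contradicting the construction. This is the orbifold analogue of the classical Almgren-type correspondence between sweepouts by 1-cycles and $H_2$-classes of the ambient space, applied to the 2-segment variant of 1-cycles developed in Sections~\ref{sec:cycles}--\ref{sec:deforms}.

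The main obstacle is Step 1: constructing a length-bounded sweepout of degree $1$ in the orbifold setting. In the smooth 2-sphere case the $4D$ bound is Nabutovsky--Rotman's theorem, whose proof is essentially topological and uses only the metric diameter, so it extends to the Lipschitz 2-sphere $|\mathcal{O}|$. The orbifold-specific subtleties are: (i) non-unique minimizing orbifold paths across cut loci, which I would handle by working throughout with specific $\mathcal{G}$-lifts rather than with underlying projected images, so that any continuous choice of minimizer produces a well-defined element of $\hat\Gamma$; and (ii) the singular strata of $\mathcal{O}$, which I would handle by first passing to Lange's metric double cover (eliminating exceptional strata and leaving only isolated singularities), performing the construction there, and projecting back down, with isolated singular points handled by perturbing the Morse function in local charts so that its level sets avoid them while keeping the length bound intact.
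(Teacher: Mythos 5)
Your high-level strategy matches the paper's: assume $l(\mathcal{O})>4D(\mathcal{O})$, construct a continuous family of 2-segment orbifold 1-cycles with length bounded by roughly $4D(\mathcal{O})$, apply the deformation theorems of \S\ref{sec:deforms}, and derive a contradiction from a topological obstruction. However, the two crucial ingredients --- (a) the actual construction of the family and (b) the argument that it yields the contradiction --- differ from the paper, and the first one has a genuine gap.

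The gap is in the length bound. You write that splitting a level-set loop of a Morse function at its intersection with the diameter-realizing geodesic $\gamma$ ``produces two arcs whose endpoints lie on $\gamma$; the triangle inequality applied to these endpoints gives arc length at most $2D(\mathcal{O})$.'' The triangle inequality controls the \emph{distance} between the two endpoints of an arc, not its \emph{length}. A level-set arc of a Morse function (or of the distance function $d_p$) on a Riemannian 2-sphere can be arbitrarily long regardless of the diameter --- think of a highly oscillating ``caterpillar'' metric on $S^2$ with small diameter but enormous level sets. Nothing in your construction replaces these arcs by controlled-length competitors, so the bound $\le 4D(\mathcal{O})$ does not follow, and without that bound you cannot invoke the deformation theorems (which require length $< l(\mathcal{O})$). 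The paper sidesteps this entirely by \emph{building} each cycle out of length-controlled pieces: it takes a fine triangulation of $|\mathcal{O}|$ with regular vertices and edge lengths $<\delta$, adds a cone vertex $x_0$, and observes that each cone triangle $x_ix_jx_0$ has perimeter $\le 2D(\mathcal{O})+\delta$ since the two edges at $x_0$ are minimizing geodesics (length $\le D(\mathcal{O})$ by the definition of diameter) and the third edge has length $<\delta$. A 2-segment cycle made of two such triangles then has total length $\le 4D(\mathcal{O})+2\delta$. The obstruction to contracting the sphere is then obtained via the explicit Lemma on nonextendability of the 3-skeleton, not by an a priori Morse or sweepout argument.

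Two smaller points. First, your reliance on ``Nabutovsky--Rotman's theorem extends to the Lipschitz 2-sphere $|\mathcal{O}|$'' does not settle anything: the closed geodesics on the Lipschitz space $|\mathcal{O}|$ are not the orbifold geodesics of $\mathcal{O}$ (geodesics can reflect off cone points in the underlying space while being smooth orbifold geodesics), so the theorem for $|\mathcal{O}|$ would bound the wrong quantity, and the entire point of the paper is to develop the orbifold analogue from scratch. Second, passing to the metric double is unnecessary here --- for a 2-orbifold with $|\mathcal{O}|\cong S^2$ there are no exceptional (mirror) strata, so all singular points are already isolated cone points; the double is only used in the paper for the Corollary about general orbifolds with finite orbifold fundamental group. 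Your non-triviality/degree argument in the last step is roughly the right idea but is much less developed than the paper's explicit construction of $F:\mathbb{D}^3\rightarrow|\mathcal{O}|$ with $F|_{S^2}$ non-null-homotopic (Lemma \ref{thm:constructD3}); making it rigorous would require essentially reproducing that construction.
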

The idea of the proof is the following: Using topological property of Riemannian 2-orbifolds homeomorphic to $S^2$, we construct a family of non-contractible ordered piecewise-geodesic orbifold 1-cycles no longer than $4D(\mathcal{O})$. Assume the length of the shortest geodesic $l(\mathcal{O})$ is greater than $4D(\mathcal{O})$, by deformation results of the last chapter, we can homotope the family of 1-cycles to a fixed point on $|\mathcal{O}|$, which contradicts the non-contractibility of the family.


We first construct a specific 2-simplex on $|\mathcal{O}|$ that is not null-homotopic. 

Choose a fine triangulation on the underlying topological space $|\mathcal{O}|$ of $\mathcal{O}$ such that vertices are chosen from regular points, edges of triangles are minimizing geodesic between vertices, and the length of each edge of triangles is less than $\delta$ as chosen in Section \ref{subsec:Birkhoff}. The triangulation induces a simplicial complex structure on $|\mathcal{O}|$: $0$-simplices are vertices, denoted $x_i$ for $i\in I$ for some index set $I$. $1$-simplices are minimizing geodesic edges $x_ix_j$'s. According to the structure theorem by Borzellino \cite{borzellino1992riemannian}*{Proposition 31}, minimizing geodesics between regular points cannot travel through singular strata. Therefore the 1-simplex 
$x_ix_j$ stays within the principal stratum of $\mathcal{O}$. $x_ix_j$ corresponds to a unique orbifold curve $[x_ix_j]$, since it lies in the regular part of the orbifold. Also, since all the vertices are regular, we can concatenate $[x_ix_j]$ $[x_jx_k]$ $[x_kx_i]$ to get an orbifold free loop, denoted by $[c_{ijk}]$. Note that $[c_{ijk}]$ represent an ordered piecewise geodesic orbifold loop $[c_{ijk}]_o$ with $N$ breaks. Since the length of $[c_{ijk}]$ is less than $3\epsilon<\ell(\mathcal{O})$, by Corollary \ref{thm:cor-loopunderlying}, we have a homotopy $H_{ijk}$ from $|[c_{ijk}]_o|$ to a constant loop at a point, denoted by $p_{ijk}$. As in the proof of \cite{NR02}*{Theorem 1.1}, the homotopy contracting a loop to a point can be used to construct a 2-simplex $x_ix_jx_k:\Delta^2\rightarrow|\mathcal{O}|$ filling the interior of $x_ix_j+x_jx_k+x_kx_i=:\partial(x_ix_jx_k)$.


Next we try to extend the simplical complex structure by adding a regular point $x_0$ as an extra $0$-simplex on $|\mathcal{O}|$. For any $x_i$ and $x_0$, a minimizing geodesics $x_0x_i$ from $x_0$ to $x_i$ is chosen as an extra 1-simplex on $\mathcal{O}$. For any 1-simplex $x_ix_j$, the length of $[c_{ij0}]_o$ is bounded from above by $2D(\mathcal{O})+\delta<4D(\mathcal{O})<\ell(\mathcal{O})$ thus satisfies the assumption of Corollary \ref{thm:cor-loopunderlying}. Then the extra 2-simplices $x_ix_jx_0$'s are constructed by filling the interior of $\partial(x_0x_ix_j)$ with homotopies $H_{ij0}$'s that shrinks $[c_{ij0}]_o$ to a constant loop. As for 3-simplices, we have the following lemma.

\begin{lem}
    There exists a 2-simplex $x_ix_jx_k$ such that there exists no 3-simplex $x_ix_jx_kx_0$ with faces $x_ix_jx_k$  $x_jx_kx_0$ $x_ix_kx_0$ and $x_ix_jx_0$.
\end{lem}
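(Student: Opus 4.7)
The plan is a proof by contradiction. Suppose that for every triangle $x_ix_jx_k$ of the triangulation, a $3$-simplex $x_0x_ix_jx_k\colon \Delta^3\to|\mathcal{O}|$ with the four prescribed boundary faces exists. Gluing these $3$-simplices along their shared $2$-faces of the form $x_0x_ix_j$---each appearing in exactly two $3$-simplices, one for each triangulation triangle containing the edge $x_ix_j$, and agreeing there by construction---produces a continuous map $\Phi\colon |K|\to|\mathcal{O}|$, where $|K|$ is the geometric realization of the abstract simplicial cone on the triangulation with apex $x_0$. Topologically $|K|\cong D^3$, and $\partial|K|\cong S^2$ is the subcomplex formed by the $2$-simplices $x_ix_jx_k$. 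Since $|K|$ is contractible, $\Phi|_{\partial|K|}\colon S^2\to|\mathcal{O}|\cong S^2$ is null-homotopic and in particular has degree $0$.

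The goal is then to derive a contradiction by showing that $\Phi|_{\partial|K|}$ is actually homotopic to the identity, hence has degree $1$. Note that $\Phi|_{\partial|K|}$ is the identity on the $1$-skeleton of the triangulation (sending each geodesic edge $x_ix_j$ to itself), and on each $2$-cell $\{x_i,x_j,x_k\}$ it is the trace of the null-homotopy $H_{ijk}$ from Corollary~\ref{thm:cor-loopunderlying}, regarded as a map $D^2\to|\mathcal{O}|$ with $\partial D^2$ mapped onto the geodesic triangle $\partial T_{ijk}$.

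The key step is that, by refining the triangulation so that each triangle $T_{ijk}$ lies inside a single orbifold chart $V_\alpha$ (whose underlying topological space is a disk, hence contractible), and by implementing $H_{ijk}$ through the explicit construction of Corollary~\ref{thm:cor-loop} (Theorem~\ref{thm:deform1} shortens $\tilde p[c_{ijk}]_o$ within the chart, after which the breakpoints are contracted to the basepoint), the trace of $H_{ijk}$ remains entirely inside $V_\alpha$. Any two null-homotopies of the loop $\partial T_{ijk}$ inside the disk $V_\alpha$ are homotopic rel boundary, so $H_{ijk}$ is homotopic rel boundary to the standard geodesic filling $\tau_{ijk}^{+}$ of $T_{ijk}$. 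Summing over all triangles, $\Phi|_{\partial|K|}$ is homotopic to the map $\psi\colon S^2\to|\mathcal{O}|$ obtained by using the standard fillings on every $2$-cell. Since $\psi$ is literally the identity of the triangulated $|\mathcal{O}|\cong S^2$, it has degree $1$, contradicting degree $0$.

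The main obstacle is verifying that, after refining the triangulation, the trace of $H_{ijk}$ indeed remains inside a single chart. This requires a quantitative control of the descent flow and Birkhoff shortening of Theorem~\ref{thm:deform1}: both are length non-increasing, and each endpoint moves along a locally-defined descent vector of bounded norm, so iterating them on a loop of initial length at most $3\delta$ (where $\delta$ is the Lebesgue number of the atlas) produces only a bounded excursion, which can be kept inside a chart of radius comparable to $\delta$ by choosing the triangulation fine enough. The final basepoint-contraction step in Corollary~\ref{thm:cor-loop} is by definition supported inside the chart containing the shortened loop.
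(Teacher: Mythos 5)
Your proof follows the same outline as the paper's: assume every $3$-simplex $x_ix_jx_kx_0$ exists, glue them to a map $\Phi\colon \mathbb{D}^3\to|\mathcal{O}|$, and derive a contradiction from the fact that $\Phi|_{S^2}$ must be null-homotopic (as it extends over $\mathbb{D}^3$) yet should have degree $\pm 1$. Where you genuinely go beyond the paper is in justifying the degree-$\pm1$ claim. The paper simply asserts that $\sum x_ix_jx_k$ ``can be viewed as the identity map of $S^2$,'' but, as you correctly observe, the $2$-simplices $x_ix_jx_k$ are null-homotopy fillings of the geodesic triangle boundaries rather than the literal triangles, and a self-map of $S^2$ agreeing with the identity on the $1$-skeleton of a triangulation while using arbitrary $2$-cell fillings can have any degree. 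Your fix --- refine so each triangle and its filling live in a contractible chart $V_\alpha$, then use $\pi_2(V_\alpha)=0$ to compare the filling with the standard one rel boundary --- is the right shape of repair, and it addresses a real gap in the paper's terse argument.

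That said, the containment fact your argument needs --- that the trace of $H_{ijk}$ stays inside $V_\alpha$ --- is not true of the $H_{ijk}$ the paper actually uses. The paper invokes Corollary~\ref{thm:cor-loopunderlying}, whose homotopy comes from Theorem~\ref{thm:deform2} and in particular includes the stages $\tilde H^2$ (pushing cycles off a point $p\in|\mathcal{O}|$) and $\tilde H^3$ (a global deformation retraction of $S^2\setminus\{p\}$ onto a single point), which certainly leave any fixed chart. You instead invoke the construction of Corollary~\ref{thm:cor-loop} (Theorem~\ref{thm:deform1} shortening followed by a chart-supported contraction), which is a legitimate substitution; but then the excursion bound for the Theorem~\ref{thm:deform1} flow must actually be carried out: the descent vectors have bounded norm and the first variation of length along them is bounded below by a fixed $\delta'$, so the flow time and hence the endpoint drift are $O(L_0/\delta')$ and can be made smaller than the chart radius by refining the triangulation --- but one must still check that the flow remains in the regime where the $\delta'$-lower-bound applies, and that the interleaved Birkhoff steps do not enlarge the support. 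So your proposal correctly diagnoses the paper's weak point and supplies a plausible fix, but the key containment estimate remains a nontrivial claim to be verified rather than assumed.
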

\begin{proof}
    Suppose that for any 2-simplex $x_ix_jx_k$, there exists a 3-simplex $x_ix_jx_kx_0$, gluing together all such 3-simplices gives a simplicial complex $\mathbb{D}^3\rightarrow |\mathcal{O}|$, where each piece $x_ix_jx_kx_0$ is identified with a map from a radial 3-simplex in $\mathbb{D}^3$ to $|\mathcal{O}|$ as shown in Figure \ref{fig:extension}.

    \begin{figure}[!ht]
    \centering
    \includegraphics[scale=0.2]{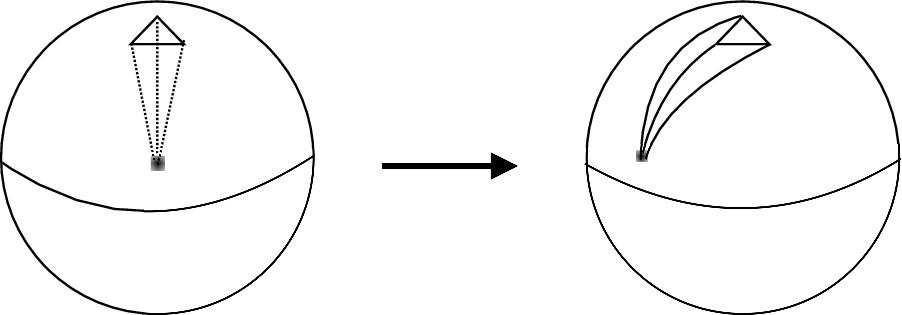}
    \caption{Extension of Skeleton}\label{fig:extension}
    
\end{figure}
    
    The simplicial complex has boundary $S^2\rightarrow|\mathcal{O}|$, which is also $\sum x_ix_jx_k$, which can be viewed as the identity map of $S^2$. However $\mathbb{D}^3$ is contractible. There is a contradiction.
\end{proof}

For simplicity, say $x_1x_2x_3$ is the 2-simplex that does not extend. Also, denote $x_{ijk}$ as the point on $|\mathcal{O}|$ where the constant loop $H_{ijk}(1)$ lies. 


\begin{figure}[!ht]
    \centering
    \includegraphics[scale=0.6]{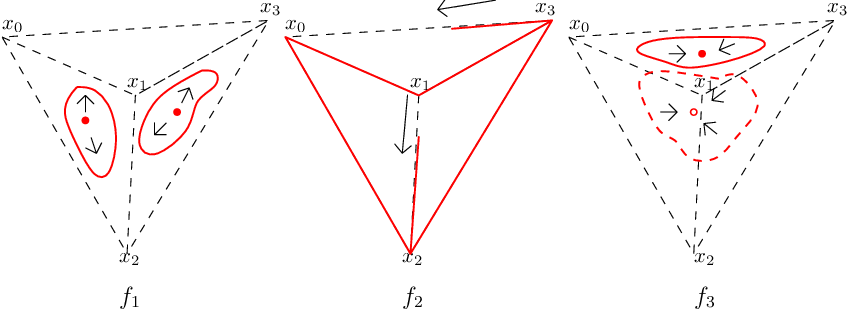}
    \caption{Family $f$}\label{fig:family}
\end{figure}

Now we construct the family $f:[0,1]\rightarrow\hat{\Gamma}^{\le 4D(\mathcal{O})+2\delta}_{3N}$ from three components. The construction follows the idea as in the proof of \cite{NR02}*{Theorem 1.1}.

(i) $f_1:[0,1]\rightarrow \hat{\Gamma}^{\le 4D(\mathcal{O})+2\delta}_{3N}$ is defined as the map sending $t$ to $H_{102}\sqcup H_{123}(1-t)$.



(ii) Let $[c_{102123}]_o$ be the orbifold 1-cycle of type II constructed by the concatenation of $[x_1x_0]$ $[x_0x_2]$ $[x_2x_1]$ $[x_1x_2]$ $[x_2x_3]$ and $[x_3x_1]$. Note that there is a backtrack along $[x_1x_2]$ in the middle. $f_2:[1,\frac{4}{3}]\rightarrow \hat{\Gamma}^{\le 4D(\mathcal{O})+2\delta}_{3N}$ is defined as the map which reduces the backtrack $[x_2x_1]*[x_1x_2]$ linearly until it is completely cancelled at $t=\frac{4}{3}$, which in the end gives us $[c_{1023}]_o$. Note that $[c_{1023}]_o$ is geometrically identical to $[c_{0231}]_o$. We define $f_2$ on $[\frac{4}{3},\frac{5}{3}]$ as a reparemetrization from $[c_{1023}]_o$ to $[c_{0231}]_o$. Such reparametrization can be defined in a linear way without adding extra breakpoints. $f_2$ on $[\frac{5}{3},2]$ is defined as a linear homotopy which deforms $[c_{0231}]_o$ by extending out a backtrack along $x_0x_3$ linearly until we reach $[c_{023031}]_o$.

(iii)  We define $f_3:[2,3]\rightarrow\hat{\Gamma}^{\le 4D+2\delta}_{3N}$ as the map sending $t$ to $H_{023}\sqcup H_{031}(t-2)$ for $t\in[2,3]$.

Note that we cannot directly glue $f_i$'s together to get $f$, since $f_1(1)$ and $f_2(2)$ does not match with $f_2(1)$ and $f_3(2)$, respectively. In fact $f_1$ and $f_3$ are families of type I orbifold 1-cycle while $f_2$ consists of type II's. To fix this we put instead $(p_2)^{-1}[c_{102123}]_o$ at $t=1$ and $(p_2)^{-1}[c_{023031}]_o$ at $t=2$ for $f$. Here $p_1:\hat{\Gamma}_3\rightarrow\hat{\Gamma_1}$ and $p_2:\hat{\Gamma}_3\rightarrow\hat{\Gamma_1}$ are induced straightforwardly by $p_1:\Gamma_3\rightarrow\Gamma_1$ and $p_2:\Gamma_3\rightarrow\Gamma_1$, respectively. One easily check that $(p_2)^{-1}$ is non-empty for a $[c]_o$ if $|[c]|(1)=|[c]|(2)$, and unique if $|[c]|(1)$ is regular. On the rest of the domain, we set $f=f_1$ on $[0,1)$, $f=f_2$ on $(1,2)$, and $f=f_3$ on $(2,3]$.

The continuity of $f$ within each stage follows from the continuity of homotopy $H_{ijk}$'s, the continuity of reparametrization, the continuity of backtrack cancellation and extension. The continuity of $f$ at $t=1$ and $t=2$ is a consequence of $p_1\circ f(1)=f_1(1)$, $p_2\circ f(1)=f_2(1), p_2\circ f(2)=f_2(2)$, and $p_1\circ f(2)=f_3(2)$.

Now we are really to prove the main theorem.

\begin{proof}[Proof of the Main Theorem]
      Under the assumption that $l(\mathcal{O})>4D(\mathcal{O})+2\delta$, the $f$ constructed above, as shown in figure \ref{fig:family}, sweeps out the entire  $\partial(x_0x_1x_2x_3):\Delta^2\rightarrow |\mathcal{O}|$. Following \cite{almgren1962homotopy}, $f$ corresponds to a 2-simplex homologous to $\partial(x_0x_1x_2x_3)$. By Almgren's isomorphism Theorem, the non-contractibility of $\partial(x_0x_1x_2x_3)$ implies the non-contractibility of $f$. On the other hand, by Theorem \ref{thm:deform2} there exists a homotopy $H:[0,1]\times[0,3]\rightarrow \hat{\Gamma}^{\le 4D+2\delta}_{9N}$ such that $H(0,\cdot)=|f|(\cdot)$ and $H(1,\cdot)$ is constant at some $q\in|\mathcal{O}|$, which contradicts the non-contractibility. Therefore $l(\mathcal{O})\le 4D(\mathcal{O})+2\delta$. Note that $\delta>0$ can be chosen to be arbitrary small. Therefore $l(\mathcal{O})\le 4D(\mathcal{O})$.
\end{proof}

Using the argument in the theorem, we also have the following corollary.
\begin{cor}\label{cor:last}
	Let $\mathcal{O}$ be a compact Riemannian 2-orbifold with a finite orbifold fundamental group, $l(\mathcal{O})\le 8D(\mathcal{O})$.
\end{cor}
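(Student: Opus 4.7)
The plan is to reduce the corollary to the main theorem by passing to an orbifold cover $\pi\colon\tilde{\mathcal{O}}\to\mathcal{O}$ whose underlying space is homeomorphic to $S^2$. Since the orbifold fundamental group of $\mathcal{O}$ is finite, any orbifold cover of $\mathcal{O}$ is compact; in particular the orbifold universal cover is a compact simply connected 2-orbifold. A simply connected orbifold has trivial local isotropy everywhere, so it is a compact simply connected surface, hence homeomorphic to $S^2$. If $|\mathcal{O}|$ is already $S^2$, the main theorem applies directly and yields the even stronger bound $l(\mathcal{O})\le 4D(\mathcal{O})$; otherwise I would take a degree-two intermediate cover, such as the metric double along the exceptional stratum (recalled in \S 2 following Lange) when $\mathcal{O}$ has reflector lines, or the orientation double cover when $\mathcal{O}$ is non-orientable as an orbifold and has no reflectors.

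Applying the main theorem to $\tilde{\mathcal{O}}$ gives $l(\tilde{\mathcal{O}})\le 4D(\tilde{\mathcal{O}})$. Because $\pi$ is a local isometry, every closed orbifold geodesic in $\tilde{\mathcal{O}}$ projects to a closed orbifold geodesic of the same length in $\mathcal{O}$, so $l(\mathcal{O})\le l(\tilde{\mathcal{O}})$. Combining with the diameter comparison $D(\tilde{\mathcal{O}})\le 2D(\mathcal{O})$ chains to
\[
l(\mathcal{O})\;\le\; l(\tilde{\mathcal{O}})\;\le\; 4D(\tilde{\mathcal{O}})\;\le\; 8D(\mathcal{O}),
\]
giving the claim.

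The main obstacle is the diameter comparison $D(\tilde{\mathcal{O}})\le 2D(\mathcal{O})$. The metric-double case is the cleanest: for $\tilde{x},\tilde{y}\in\tilde{\mathcal{O}}$, pick a point $z$ in the closure of the exceptional stratum $R$ closest to $\pi(\tilde{y})$, and concatenate a lift (to $\tilde{x}$'s sheet) of a minimizing geodesic in $\mathcal{O}$ from $\pi(\tilde{x})$ to $z$ with a lift (to $\tilde{y}$'s sheet) of a minimizing geodesic from $z$ to $\pi(\tilde{y})$. The two lifts agree at the branch point $z$, and the total length is at most $D(\mathcal{O})+d_{\mathcal{O}}(\pi(\tilde{y}),R)\le 2D(\mathcal{O})$. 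The free (orientation) case lacks such a branch locus and requires an extra argument using the covering involution $\sigma$ together with a uniform upper bound on $d(\tilde{y},\sigma(\tilde{y}))$; the equality case $S^2\to\mathbb{RP}^2$ with the round metric shows that the factor of two is optimal.
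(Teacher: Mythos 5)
Your approach is essentially the paper's: pass to a double orbifold cover $\tilde{\mathcal{O}}\to\mathcal{O}$ whose underlying space is $S^2$, project closed geodesics downward to get $l(\mathcal{O})\le l(\tilde{\mathcal{O}})$, and prove the diameter comparison $D(\tilde{\mathcal{O}})\le 2D(\mathcal{O})$. Your argument for the diameter comparison in the metric-double case is the same as the paper's (lift two short paths in $\mathcal{O}$ meeting at a branch point $z$ on the reflector locus, where the two preimages of $z$ coincide); picking $z$ closest to $\pi(\tilde{y})$ is an inessential refinement. Two small remarks. First, the aside ``a simply connected orbifold has trivial local isotropy everywhere'' is false --- the teardrop $S^2(p)$ is simply connected as an orbifold but not a manifold --- though you never actually use it, since the orbifold $S^2(p)$ already falls into your first case $|\mathcal{O}|\cong S^2$ where the main theorem applies directly. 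Second, you correctly observe that an orbifold with underlying space $\mathbb{RP}^2$ (such as $\mathbb{RP}^2$ or $\mathbb{RP}^2(p)$) has finite orbifold fundamental group and no reflector strata, so the metric-double trick does not produce a nontrivial cover, and an orientation double cover is needed; you leave the diameter comparison in this free-cover case unproven. The paper's proof does not explicitly handle this case either --- it asserts that every finite-fundamental-group 2-orbifold is either $S^2$ or has a metric double that is $S^2$, which as stated skips the $\mathbb{RP}^2$-based orbifolds --- so on this point you are, if anything, more careful than the source, but the gap you flagged is genuine and does need closing.
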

\begin{proof} 
	The corollary follows from the classification of 2-orbifolds \cite{thurston2022geometry}: a 2-orbifold $\mathcal{O}$ with a finite orbifold fundamental group either is homeomorphic to $S^2$, or admits a double cover $p$ by a 2-orbifold $\hat{\mathcal{O}}$ homeomorphic to $S^2$. By \cite{La20} the double cover can be made into a Riemannian double cover. $\ell(\hat{\mathcal{O}})$ is no greater than $4D(\hat{\mathcal{O}})$. Denote this closed orbifold geodesic by $[\hat{c}]$. $[\hat{c}]$ projects down to a closed orbifold geodesic $[c]$ on $\mathcal{O}$ by $p$. Therefore $l(\mathcal{O})\le \Length[c]\le\Length[\hat{c}]= l(\hat{\mathcal{O}})$. 
    
    On the other hand, $D(\hat{\mathcal{O}})\le2D(\mathcal{O})$. Suppose not, let $r$ be a number with $D(\hat{\mathcal{O}})>2r>2D(\mathcal{O})$. Say $p,q$ on $\hat{\mathcal{O}}$ has distance greater than $2r$, then $B_r(p)$ and $B_r(q)$ are disjoint balls. Both $B_r(p)$ and $B_r(q)$ project down to all of $\mathcal{O}$ since $r>D(\mathcal{O})$. Since any point in $\mathcal{O}$ has at most two preimage by $p$, we have $B_r(p)\sqcup B_r(q)=\hat{\mathcal{O}}$, contradicting the connectivity.
    
\end{proof}

\appendix
\appendixpage
\section{Continuity of the Birkoff Homotopy}

The following Lemma is useful in modifying representatives of orbifold 1-cycle.

\begin{lem}\label{lem:perturb}
    Let $[c]$ be an $L$-Lipschitz orbifold 1-cycle with representative $c=(c_i,g_i)$ over the subdivision $0=t_0\le t_1\le...\le t_N=1\le...\le t_{2N}=2$. Let $0=t^1_0\le...\le t^1_N=1\le...\le t^1_{2N}=2$ be another subdivision with $|t_i-t^1_i|<\epsilon$ for any $i$ for some small enough $\epsilon>0$. Then there is a new representative $c^{1}$ of $[c]$ over the new subdivision such that $c^1_i$ and $c_i$ are in the same component of $X$ and $g_i$ and $g^1_i$ are germs of the same local isometry.
\end{lem}
\begin{proof}
    First assume $t_1<t^1_1$. For any $i$ $g_i$ extend to a local isometry $\bar{g}_i$ on a small radius $\epsilon_i$. Let $\epsilon>0$ be smaller than $\dfrac{\min_i\{\epsilon_i\}}{L}$. Then the first segment $c^{1}_1$ can be constructed by gluing $c_1$ with $\bar{g}_1\circ c^1_2|_{[t_1,t^1_1]}$. The perturbed $g^1_1$ is the germ of $\bar{g}_1$ at $c^1_2(t^1_1)$. If $t_1>t^1_1$, $c^1_1$ will simply be $c_1|_{[0,t^1_1]}$. In other words, local isometries $\bar{g}_i$ allow us to cutoff and transfer segments around on where the two subdivision does not align. Do the same for all other segments and groupoid elements to get the new representative $c^{1}$.
\end{proof}

A direct consequence is the following Lemma, which shows that for piecewise-geodesic orbifold 1-cycles, it suffices to check convergence on subdivision points between geodesics and groupoid elements.

\begin{lem}\label{lem:ez}
    Let $[c^0]$ and $[c^j]$'s be orbifold 1-cycles of the same type with representative $c^0=(g^0_i,c^0_i)$ and $c^j=(g^j_i,c^j_i)$ such that $c^0_i$'s and $c^j_i$'s are unique minimizing geodesics for all $i$ and $j$, and $t^j_i$'s and $t^0_i$'s are the subdivision points. Then $g^j_i\rightarrow g^0_i$ and $t^j_i\rightarrow t^0_i$ for all $i$ implies $[c^j]\rightarrow [c^0]$.
\end{lem}

\begin{proof}
    For large enough $j$, subdivision of $c^j$ are close to that of $c^0$. We constructed new representatives $c^{j,1}$ as in Lemma \ref{lem:perturb}. By Lipschitz-ness and $g^j_i\rightarrow g^0_i$, we have $g^{j,1}_i\rightarrow g^0_i$. Again by Lipschitz-ness, we have that $c^{j,1}$ is in the same modelling neighborhood of $T_{c^0}\Gamma$ and pointwise convergence $c^{j,1}_i\rightarrow c^0_i$ for any $i$.
\end{proof}


From now on, let $s^j\rightarrow s^0$ in $[0,1]$ and $[c^j]\rightarrow[c^0]$ in $\Gamma^{\le L}_N$, in other words, $(s^j,[c^j])\rightarrow(s^0,[c^0])$ in $[0,1]\times\Gamma^{\le L}_N$. Then for $j$ large enough, there exists $c^j=(c^j_i,g^j_i)$ modeling representative of $[c^j]$ with respect to the representative $c^0=(c^0_i,g^0_i)$ of $[c^0]$, where $c^j$ and $c^0$ are defined over the same subdivision $0=t_0<t_1<...<t_{2N}=2$, such that, $c^j_i\rightarrow c^0_i$ pointwise and $g^j_i\rightarrow g^0_i$ for $i=1,2,...,2N$. 

Recall that $\Psi^1[c]\circ P_{[c]}=[c]$. For simplicity, denote $P_{[c^0]}$ by $P_0$, and $P_{[c^j]}$ by $P_j$. Recall that $f_s^{[c]}=s\cdot \text{id}_{[0,2]}+(1-s)P_{[c]}$. For simplicity, denote $f_{s^0}^{[c^0]}$ by $f_0$, and $f_{s^j}^{[c^j]}$ by $f_j$. It is an elementary exercise that $P_j\rightarrow P_0$ pointwise and $f_j\rightarrow f_0$ pointwise. Since $P_j$'s are not necessarily one-to-one, they might not have inverses. However they are piecewise-linear map, which means they admits unique upper semi-continuous inverse, which is monotonely increasing piecewise-linear and has finitely many jumping discontinuities. We abuse notation and call such inverse $P_j^{-1}$'s. On the other hand $f^j$'s have inverses except when $s^j=0$. One can check that $(f^{-1}\circ P)_j$ defined as $f^{-1}_j\circ P_j$ if $s^j\ne0$ and to be the identity map if $s^j=0$, is a continuous Lipschitz function. One can check that $(f^{-1}\circ P)_j\rightarrow (f^{-1}\circ P)_0$ pointwise.

\begin{lem}
    $\Phi^1(s^j,[c^j])\rightarrow \Phi^1(s^0,[c^0])$ pointwise.
\end{lem}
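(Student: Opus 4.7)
The plan is to reduce the convergence of $\Phi^1$ to the convergence of the underlying reparametrizations $f_j = f_{s^j}^{[c^j]}$ combined with the convergence $\Psi^1[c^j] \to \Psi^1[c^0]$ already proved in Lemma \ref{thm:lem-Psictns}. Recall $\Phi^1(s,[c]) = \Psi^1[c] \circ f_s^{[c]}$ and $f_j(t) = s^j t + (1-s^j) P_j(t)$. The first observation I would make is that Lemma \ref{thm:lem-Pctns} combined with $s^j \to s^0$ yields pointwise convergence $f_j(t) \to f_0(t)$ for every $t \in [0,1] \sqcup [1,2]$.

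Next, using the modeling representatives $d^{j,2}$ and $d^{0,2}$ of $\Psi^1[c^j]$ and $\Psi^1[c^0]$ set up just before this lemma, defined over a common subdivision $0 = \tau_0 < \tau_1 < \ldots < \tau_n = 2$ without multiplicity at endpoints and satisfying $d^{j,2}_i \to d^{0,2}_i$ pointwise for each $i$, I would fix $t$ and let $f_0(t)$ lie in some interval $[\tau_{i-1},\tau_i]$. For $j$ large, $f_j(t)$ lies either in the same interval or in an immediately adjacent one; in the latter case I carry the evaluation of $d^{j,2}_{i\pm 1}$ across the break $\tau_i$ (resp.\ $\tau_{i-1}$) by the local isometry $\bar{g}$ generated by the groupoid element joining consecutive segments, placing it in the same component of $X$ as $d^{0,2}_i$. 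The triangle inequality
\[
d_X\bigl(d^{j,2}_i(f_j(t)),\, d^{0,2}_i(f_0(t))\bigr) \le L\cdot |f_j(t)-f_0(t)| + d_X\bigl(d^{j,2}_i(f_0(t)),\, d^{0,2}_i(f_0(t))\bigr)
\]
then closes the argument: the first term tends to $0$ by the previous paragraph and the $L$-Lipschitz property of $d^{j,2}$, while the second tends to $0$ by Lemma \ref{thm:lem-Psictns}.

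To promote this componentwise convergence to pointwise convergence of modeling representatives of $\Phi^1(s^j,[c^j])$ and $\Phi^1(s^0,[c^0])$ as orbifold $1$-cycles, the main subtlety is that a natural $\mathcal{G}$-cycle representative of $\Psi^1[c] \circ f_s^{[c]}$ is subdivided at the preimages $(f_s^{[c]})^{-1}(\tau_i)$ (with the upper semi-continuous inverse), and these breakpoints for $f_j$ need not coincide with those for $f_0$. I expect this to be the only real obstacle; it is handled by precisely the perturbation trick used in the proof of Lemma \ref{thm:lem-Psictns}, namely moving the end slivers of each segment that straddle a $\tau_i$-threshold across the break via the appropriate $\bar{g}_i$, to produce new representatives over a subdivision matching that of the limit. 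Once the subdivisions match, the pointwise convergence on each segment is exactly the triangle inequality displayed above, completing the proof.
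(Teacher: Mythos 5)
Your argument mirrors the paper's proof: it uses the pointwise convergence $f_j\to f_0$ (from Lemma \ref{thm:lem-Pctns} and $s^j\to s^0$), the common-subdivision representatives $d^{j,2},d^{0,2}$ from Lemma \ref{thm:lem-Psictns}, the same segment-moving perturbation to match the breakpoints $f_j^{-1}(\tau_i)$ with $f_0^{-1}(\tau_i)$, and the same two-term triangle inequality (Lipschitz control on the $f_j$ vs.\ $f_0$ shift, plus pointwise convergence of the $d^{j,2}_i$). This is essentially the same approach as the paper, just presented with the pointwise estimate stated before the subdivision-matching step rather than after.
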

\begin{proof}
    $c^j\circ P^{-1}_j$'s are representatives for $\Psi^1[c^j]$'s and $c^0\circ P^{-1}_0$ is a representative for $\Psi^1[c^0]$. Also, $\Phi^1(s^j,[c^j])$ admits a representative $d^j:=c^j\circ P_j^{-1}\circ f_j$ with subdivision points $(f^{-1}\circ P)_j(t_i)$ and groupoid elements $g^j_i$'s for all $j$ (and for $j=0$). We have $(f^{-1}\circ P)_j(t_i)\rightarrow (f^{-1}\circ P)_0(t_i)$ for all $i$ by previous discussion. We also have that $g^j_i\rightarrow g^0_i$ by the assumption of $c^j\rightarrow c^0$. We cannot use Lemma \ref{lem:ez} to conclude convergence just yet, since $d^j$ is not necessarily a geodesic. Geometrically, this is because $\Phi^1$ gradually enlarges the ``densely parametrized''(faster) segments of $[c]$ while shrinking the ``loosely parametrized''(slower) ones (in order to get a constant-speed orbifold 1-cycle at $s=1$ eventually), thus introduces new non-geodesic points if $s\ne0,1$. These non-geodesic points mark the boundaries of domains of different speed. However these new non-geodesic points for $\Phi^1(s,[c])$ will always be at the original breakpoints $t_i$'s. Let $d^{j,1}$ be the refinement of $d^j$ by adding subdivision point $t_i$'s and identity groupoid elements $\text{id}_{d^j(t_i)}$'s at $t_i$'s. Notice that $d^j(t_i)=\Psi^1(c^j)\circ f_j(t_i)$, then by Lipschitz-ness of $\Psi^1(c^j)$ and $f_j$ and pointwise convergence $\Psi^1(c^j)\rightarrow \Psi^1(c^0)$ and $f_j\rightarrow f_0$, we have that $d^j(t_i)\rightarrow d^0(t_i)$ and $\text{id}_{d^j(t_i)}\rightarrow\text{id}_{d^0(t_i)}$.
    Therefore by Lemma \ref{lem:ez}, $\Phi^1(s^j,[c^j])\rightarrow \Phi^1(s^0,[c^0])$ pointwise.
\end{proof}

The continuity of $\Phi^2$ on $\Phi^1(\Gamma^{\le L}_N)$ follows from the same argument: we verify convergence on the breakpoints corresponding to those of the original $[c^j]$'s, then verify convergence on the newly introduced non-geodesic breakpoints, and finally Lemma \ref{lem:ez} confirms pointwise convergence.


\bibliography{main}    


\noindent Jinxuan Chen

\noindent E-mail: jxchen2497@gmail.com

\noindent Address: No.39 Wanjiang Road Zhongyangxincheng 6-706, Anqing, China 246003

\noindent Website: https://sites.google.com/view/jinxuan-chen/home

\end{document}